\newtheorem{theorem}{Theorem}[section]
\newtheorem{lemma}[theorem]{Lemma}
\newtheorem{proposition}[theorem]{Proposition}
\theoremstyle{definition}
\newtheorem{remark}[theorem]{Remark}
\numberwithin{equation}{section}
\begin{document}

\title[Bifurcation from periodic solutions of central force problems]{Bifurcation from periodic solutions of central \\force problems in the three-dimensional space}

\author[A.~Boscaggin]{Alberto Boscaggin}

\address{
Dipartimento di Matematica ``Giuseppe Peano''\\ 
Universit\`{a} degli Studi di Torino\\
Via Carlo Alberto 10, 10123 Torino, Italy}

\email{alberto.boscaggin@unito.it}

\author[G.~Feltrin]{Guglielmo Feltrin}

\address{
Dipartimento di Scienze Matematiche, Informatiche e Fisiche\\ 
Universit\`{a} degli Studi di Udine\\
Via delle Scienze 206, 33100 Udine, Italy}

\email{guglielmo.feltrin@uniud.it}

\author[D.~Papini]{Duccio Papini}

\address{
Dipartimento di Scienze e Metodi dell'Ingegneria\\
Universit\`{a} degli Studi di Modena e Reggio Emilia\\
Via Giovanni Amendola 2, 42122 Reggio Emilia, Italy}

\email{duccio.papini@unimore.it}

\thanks{Work written under the auspices of the Grup\-po Na\-zio\-na\-le per l'Anali\-si Ma\-te\-ma\-ti\-ca, la Pro\-ba\-bi\-li\-t\`{a} e le lo\-ro Appli\-ca\-zio\-ni (GNAMPA) of the Isti\-tu\-to Na\-zio\-na\-le di Al\-ta Ma\-te\-ma\-ti\-ca (INdAM). 
The first and second authors are supported by the INdAM-GNAMPA Project 2025 ``Existence and symmetry breaking via nonsmooth critical point theory'' and PRIN 2022 ``Pattern formation in nonlinear phenomena''. The first author also acknowledges the support of the PNRR project ``Spoke 3 - Astrophysics and Cosmos Observation''.
\\
\textbf{Preprint -- June 2025}} 

\subjclass{34C25, 37J20, 70H12, 70H40.}

\keywords{Central force problems, non-degenerate periodic manifold, superintegrable systems, action-angle coordinates.}

\date{}

\dedicatory{}

\begin{abstract}
The paper deals with electromagnetic perturbations of a central force problem of the form
\begin{equation*}
\dfrac{\mathrm{d}}{\mathrm{d}t} \bigl( \varphi(\dot{x}) \bigr) = V'(|x|) \dfrac{x}{|x|} + E_{\varepsilon}(t,x)+\dot{x} \wedge B_{\varepsilon}(t,x), 
\qquad x \in \mathbb{R}^3 \setminus \{0\}, 
\end{equation*}
where $V \colon (0,+\infty) \to \mathbb{R}$ is a smooth function, $E_\varepsilon$ and $B_\varepsilon$ are respectively the electric field and the magnetic field, smooth and periodic in time, $\varepsilon\in\mathbb{R}$ is a small parameter. 
The considered differential operator includes, as special cases, the classical one, $\varphi(v)=mv$, as well as that of special relativity, $\varphi(v) = mv/\sqrt{1-\vert v \vert^2/c^2}$.
We investigate whether non-circular periodic solutions of the unperturbed problem (i.e., with $\varepsilon=0$) can be continued into periodic solutions for $\varepsilon\neq0$ small, both for the fixed-period problem and, if the perturbation is time-independent, for the fixed-energy problem.
The proof is based on an abstract bifurcation theorem of variational nature, which is applied to suitable Hamiltonian action functionals. In checking the required non-degeneracy conditions we take advantage of the existence of partial action-angle coordinates as provided by the Mishchenko--Fomenko theorem for superintegrable systems. 
Physically relevant problems to which our results can be applied are homogeneous central force problems in classical mechanics and the Kepler problem in special relativity.
\end{abstract}

\maketitle

\section{Introduction}\label{section-1}

In classical mechanics, the motion in the $d$-dimensional Euclidean space ($d \geq 2$) of a particle under the action of a central force is described by the differential equation
\begin{equation}\label{eq-cf}
m \ddot x = V'(|x|) \dfrac{x}{|x|}, \qquad x \in \mathbb{R}^d \setminus \{0\},
\end{equation}
where $m>0$ is the mass of the particle and $V \colon (0,+\infty) \to \mathbb{R}$ is a potential. More recently, central force problems have been also considered in a special relativistic setting; in this case, the equation writes as
\begin{equation}\label{eq-cfrel}
\dfrac{\mathrm{d}}{\mathrm{d}t}\left(\frac{m\dot{x}}{\sqrt{1-|\dot{x}|^{2}/c^{2}}}\right) = V'(|x|) \dfrac{x}{|x|}, \qquad x \in \mathbb{R}^d \setminus \{0\},
\end{equation}
where $c$ is the speed of light, see \cite{BoDaFe-2021,OrRo-25} and the references therein. It is a well known fact that, for both such equations, each orbit lies on a plane passing through the origin (see, for instance, \cite[Appendix~A]{FoTo-08}), so that in the description of the dynamics it is not restrictive to assume $d = 2$. 

Whenever the force is attractive (i.e., $V'<0$), equations \eqref{eq-cf} and \eqref{eq-cfrel} possess circular (that is, $\vert x(t) \vert \equiv R$ for some $R > 0$) periodic solutions and, often, non-circular periodic solutions, as well. Consequently, a natural problem from the point of view of nonlinear analysis and dynamical systems is to investigate whether such periodic solutions can be continued into periodic solutions of perturbed problems of the form  
\begin{equation}\label{eq-cf-pert}
m \ddot x = V'(|x|) \dfrac{x}{|x|} + \nabla_x U_\varepsilon(t,x), \qquad x \in \mathbb{R}^d \setminus \{0\},
\end{equation}
or 
\begin{equation}\label{eq-cfrel-pert}
\dfrac{\mathrm{d}}{\mathrm{d}t}\left(\frac{m\dot{x}}{\sqrt{1-|\dot{x}|^{2}/c^{2}}}\right) = V'(|x|) \dfrac{x}{|x|} + \nabla_x U_\varepsilon(t,x), \qquad x \in \mathbb{R}^d \setminus \{0\},
\end{equation}
where $\varepsilon$ is a small parameter and $U_\varepsilon\colon \mathbb{R} \times (\mathbb{R}^d \setminus \{0\}) \to \mathbb{R}$ is a family of external potentials, periodic in time with the same period of the unperturbed solution, such that $U_0(t,x) \equiv 0$. 
Note that, in spite of the fact that the orbits of the unperturbed problem are planar, in the perturbed setting it is meaningful to take $d \geq 2$ arbitrary, the choice $d=3$ being the most significant from the point of view of the applications.

It is evident that periodic solutions for $\varepsilon = 0$ cannot be isolated: indeed, since \eqref{eq-cf} and \eqref{eq-cfrel} are autonomous
equations, if $x^*(t)$ is a solution, then $x^*(t-\theta)$ is still a solution for every $\theta \in \mathbb{R}$. Accordingly, one typically looks for a so-called \textit{bifurcation}, for $\varepsilon \neq 0$ and small, from a fixed manifold $\mathcal{M}$ of periodic solutions (periodic manifold, for short) of the unperturbed problem. More precisely, we seek a family $\{x_\varepsilon\}$ of periodic solutions of the perturbed problem \eqref{eq-cf-pert} (or \eqref{eq-cfrel-pert}) which stay close, as $\varepsilon \to 0$, to some unperturbed solution $x^* \in \mathcal{M}$. Typically, these bifurcating solutions are required to have either fixed-period (that is, the same period of the unperturbed solution) or, if the external potential $U$ is time-independent, fixed-energy (that is, the same energy of the unperturbed solution, where the notion of energy is defined in agreement with the Lagrangian structure of equations \eqref{eq-cf-pert} and \eqref{eq-cfrel-pert} respectively, see Remark~\ref{rem-contiespliciti} for more details).
As a matter of fact, the crucial condition ensuring that a bifurcation actually exists is a non-degeneracy assumption for the periodic manifold $\mathcal{M}$.

The first results in this spirit seem to be the one contained in the pioneering paper \cite{AmCo-89}. Therein, an abstract bifurcation theorem of variational nature, previously established in \cite{AmCoEk-87}, was used to prove the existence of periodic solutions, bifurcating from circular solutions, for equations of the type \eqref{eq-cf-pert}, in the fixed-period case. More precisely, Keplerian-type potentials $V(r) = \kappa/(\alpha r^\alpha)$, with $\kappa,\alpha > 0$, were considered in arbitrary dimension $d \geq 2$. The validity of the required non-degeneracy condition, involving the linearization of equation \eqref{eq-cf} along a circular solution, was investigated using a Fourier series argument.
It is worth mentioning that the Keplerian case $\alpha = 1$ was left out of this analysis, since the superintegrable character of the Kepler problem determines the failure of the non-degeneracy condition. Similar results in the fixed-energy case were later obtained in \cite{AmBe-92}, using the same abstract theorem as in \cite{AmCo-89}, applied however to the Maupertuis functional instead of the Lagrangian action one. See also \cite{BA-23,FoGa-18} and the references therein for other contributions on this line of research. 
Bifurcation from circular solutions for the relativistic problem \eqref{eq-cfrel-pert}, instead, has been considered in \cite{BoFePa-25} (see also \cite{Ga-19} for a previous related result) for the Kepler potential $V(r) = \kappa/r$ in dimension $d=2$ and $d=3$, both in the case of fixed-period (again via the same abstract theorem as in \cite{AmCo-89}) and in that of fixed-energy (via a different bifurcation theorem from periodic manifolds established in \cite{We-73,We-78}). Incidentally, note that these results show that the degeneracy of the Kepler problem is broken when introducing a relativistic correction. 

The problem of bifurcation from \textit{non-circular} periodic solutions of central force problems, on the other hand, has a much more recent history and has been considered, in dimension $d=2$, starting with the paper  \cite{BoDaFe-2021}.
Therein, for the relativistic equation \eqref{eq-cfrel-pert} with Kepler potential $V(r) = \kappa/r$, the occurrence of bifurcation was proved from the manifold
\begin{equation}\label{defM2-intro}
\mathcal{M}_2 = \bigl{\{} e^{i\phi} x^*(t-\theta) \colon \phi,\theta \in \mathbb{R} \bigr{\}},
\end{equation}
made up by time-translations and planar rotations of a fixed non-circular (and non-rectilinear) periodic solution $x^*$ of the unperturbed problem.
The periodic manifold $\mathcal{M}_2$ is homeomorphic to a two-dimensional torus and in fact it corresponds to (a connected component of) the level set of the energy and the (relativistic) angular momentum: these are the two natural first integrals of equation \eqref{eq-cfrel} in the planar case $d=2$. Accordingly, in a sort of periodic counterpart of KAM theory, bifurcation from the torus $\mathcal{M}_2$ was obtained, using a higher-dimensional version of the Poincar\'e--Birkhoff theorem \cite{FoUr-17} (see also \cite{FoGaGi-16}), after proving that the KAM non-degeneracy condition
\begin{equation}\label{KAMintro}
\operatorname{det} \nabla^2 \mathcal{K}_0(I^*) \neq 0
\end{equation}
is satisfied, where $\mathcal{K}_0$ is the Hamiltonian of the unperturbed problem in action-angle coordinates
$(I,\varphi) \in \mathbb{R}^2 \times \mathbb{T}^2$, and $I^*$ is the action-value corresponding to the fixed torus $\mathcal{M}_2$.
We emphasize that this approach was greatly facilitated by the rather unusual fact that the expression of the Hamiltonian $\mathcal{K}_0$
is fully explicit for the relativistic Kepler problem. Actually, in the subsequent paper \cite{BDF-2}, this general strategy was refined and, taking advantage of an equivalent reformulation of the non-degeneracy condition \eqref{KAMintro} proved therein, bifurcation from the periodic manifold $\mathcal{M}_{2}$ was established for the perturbed problem \eqref{eq-cf-pert} in the case of homogeneous potentials $V(r) = \kappa/(\alpha r^\alpha)$ for every $\alpha <2$, with $\alpha \notin \{-2,0,1\}$.
In \cite{BDF-3} (see also \cite{BoDaFe-24ch}), counterparts of these results were given for the fixed-energy problem, using an energy reduction procedure together with the classical planar version of the Poincar\'e--Birkhoff theorem. In this case, the non-degeneracy condition to be checked turns out to be
\begin{equation}\label{KAMintro2}
\operatorname{det}
\begin{pmatrix}
\nabla^2 \mathcal{K}_{0}(I^{*}) & \nabla \mathcal{K}_{0}(I^{*})^{\top} \vspace{3pt} \\
\nabla \mathcal{K}_{0}(I^{*}) & 0
\end{pmatrix} 
\neq 0,
\end{equation}
corresponding to the so-called isoenergetic KAM non-degeneracy condition (see \cite[Appendix~8]{Ar-89} and \cite{BrHu-91}). We stress that all these contributions deal with the planar case $d=2$.

Motivated by the above described achievements, in this paper we investigate bifurcation from non-circular periodic solutions of central force problems in the \textit{three-dimensional space}. More precisely, we deal with the differential equation
\begin{equation}\label{eq-main}
\dfrac{\mathrm{d}}{\mathrm{d}t} \bigl( \varphi(\dot{x}) \bigr) = V'(|x|) \dfrac{x}{|x|} + E_{\varepsilon}(t,x)+\dot{x} \wedge B_{\varepsilon}(t,x), \qquad x \in \mathbb{R}^3 \setminus \{0\}, 
\end{equation}
where, here and throughout the paper, we assume that:
\begin{enumerate}[leftmargin=32pt,labelsep=10pt,label=\textup{$(H_{1})$}]
\item $\varphi \colon B_{a}(0) \to B_{b}(0)$ (with $a,b \in (0,+\infty]$) is a homeomorphism such that $\varphi(0)=0$ and 
\begin{equation*}
\varphi(v) = f(|v|) \dfrac{v}{|v|}, \qquad v\neq0,
\end{equation*}
where $f \colon [0,a) \to [0,b)$ is a continuous function, continuously differentiable on $(0,a)$, and such that
\label{H1}
\begin{equation*}
f(0)=0, 
\qquad 
f'(s)>0, \; \text{for every $s\in(0,a)$,} 
\qquad 
\lim_{s\to a^{-}} f(s) = b;
\end{equation*}
\end{enumerate}
\begin{enumerate}[leftmargin=32pt,labelsep=10pt,label=\textup{$(H_{2})$}]
\item $V \colon (0,+\infty) \to \mathbb{R}$ is of class $\mathcal{C}^2$;
\label{H2}
\end{enumerate}
\begin{enumerate}[leftmargin=32pt,labelsep=10pt,label=\textup{$(H_{3})$}]
\item $E_\varepsilon \colon \mathbb{R} \times (\mathbb{R}^3 \setminus \{0\}) \to \mathbb{R}^3$ and $B_\varepsilon \colon \mathbb{R} \times (\mathbb{R}^3 \setminus \{0\}) \to \mathbb{R}^3$ are given, for small $\varepsilon$, by
\begin{equation}\label{eq-maxwell}
E_{\varepsilon}(t,x)=\nabla_{x}U_{\varepsilon}(t,x) - \dfrac{\partial}{\partial t} A_{\varepsilon}(t,x),
\qquad
B_{\varepsilon}(t,x)=\mathrm{curl}_{x} \, A_{\varepsilon}(t,x),
\end{equation}
where $U_\varepsilon\colon \mathbb{R} \times (\mathbb{R}^3 \setminus \{0\}) \to \mathbb{R}$ 
and $A_\varepsilon\colon \mathbb{R} \times (\mathbb{R}^3 \setminus \{0\}) \to \mathbb{R}^3$ are $T$-periodic in $t$, two times differentiable with respect to $x$, and $A_\varepsilon$ is differentiable with respect to $t$; moreover, $U_\varepsilon$ and $A_\varepsilon$ and their derivatives are continuous in $(t,x)$ and depend smoothly on $\varepsilon$, with $U_0(t,x) \equiv 0$ and $A_0(t,x) \equiv 0$.
\label{H3}
\end{enumerate}
Note that the above setting is general enough to include perturbations of central force problems in classical mechanics as well as in the special relativistic scenario, by taking, in assumption \ref{H1}, $f(s) = ms$ and $f(s) = ms/\sqrt{1-s^2/c^2}$, respectively. We also observe that the structure for the perturbation term described in \ref{H3}, which finds its motivation in the theory of electromagnetism (indeed, $E_\varepsilon$ and $B_\varepsilon$ can be interpreted, respectively, as an electric field and a magnetic field, given, via \eqref{eq-maxwell}, by the electrostatic potential $U_\varepsilon$ and the magnetic vector potential $A_\varepsilon$) is slightly more general with respect to previously mentioned works, where $A_\varepsilon \equiv 0$ (and, thus, $E_\varepsilon= \nabla_x U_\varepsilon$ and $B_\varepsilon \equiv 0$). 
Equation \eqref{eq-main} is still of Lagrangian type (see Section~\ref{section-2.1} for the details), with associated Lagrangian function
\begin{equation}\label{eq-deflag}
\mathcal{L}_\varepsilon(t,x,v) = F(\vert v \vert) + \langle v , A_\varepsilon(t,x) \rangle + V(\vert x \vert) + U_\varepsilon(t,x),
\end{equation}
where $v=\dot{x}$ and
\begin{equation*}
F(s) = \int_{0}^{s} f(\xi)\,\mathrm{d}\xi, \quad s\in [0,a).
\end{equation*}
Accordingly, the energy can be defined as
\begin{equation}\label{eq-defenergy}
\mathcal{E}_\varepsilon(t,x,v) = G(\vert \varphi(v) \vert) - V(\vert x \vert) - U_\varepsilon(t,x),
\end{equation}
where
\begin{equation}\label{eq-defG}
G(s) = \int_{0}^{s} f^{-1}(\xi)\,\mathrm{d}\xi, \quad s\in [0,b),
\end{equation}
with $f^{-1}$ the inverse of $f$.
As well known, if equation \eqref{eq-main} is autonomous, then
$\mathcal{E}_\varepsilon(t,x,v) = \mathcal{E}_\varepsilon(x,v)$ is a first integral, that is,  
$\mathcal{E}_\varepsilon(x(t),\dot x(t))$ is constant along a solution $x$ of  \eqref{eq-main}.

In this setting, we study, both in the fixed-period case and in the fixed-energy one, bifurcation from the manifold
\begin{equation}\label{defM3}
\mathcal{M}_3 =
\bigl{\{} M x^*(t-\theta) \colon M\in O(3), \theta \in \mathbb{R} \bigr{\}},
\end{equation}
made up by time-translations and orthogonal transformations of a fixed non-circular (and non-rectilinear) periodic solution $x^*$ of the unperturbed problem
\begin{equation}\label{eq-intro-unper}
\dfrac{\mathrm{d}}{\mathrm{d}t} \bigl( \varphi(\dot{x}) \bigr) = V'(|x|) \dfrac{x}{|x|}, 
\qquad x \in \mathbb{R}^d \setminus \{0\},
\end{equation}
with $d=3$.
It turns out (see Proposition~\ref{prop-2.1}) that $\mathcal{M}_3$ is a compact manifold of dimension four, homeomorphic to $SO(3) \times \mathbb{T}^1$. We also remark that, since the orbit of $x^*$ is planar, it is not restrictive to assume that $x^*$ actually belongs to the horizontal plane $x_3 = 0$.
With this in mind, we are going to prove (see Theorem~\ref{th-abstract1} and Theorem~\ref{th-abstract2} for the precise statement) that:
\begin{quote}
\textit{If the unperturbed problem \eqref{eq-intro-unper} is non-degenerate as a planar problem, then bifurcation from $\mathcal{M}_3$ occurs for the spatial problem \eqref{eq-main}.}
\end{quote}
By ``non-degenerate as a planar problem'', we mean that the periodic manifold $\mathcal{M}_2$ defined in \eqref{defM2-intro}, made up by solutions of the unperturbed problem \eqref{eq-intro-unper} lying on the horizontal plane (that is, equivalently, solutions of \eqref{eq-intro-unper} for $d=2$), is non-degenerate in the sense considered in \cite{BDF-2} and \cite{BDF-3}, that is, conditions \eqref{KAMintro} and \eqref{KAMintro2} hold, for the fixed-period and fixed-energy problem respectively. Incidentally, we mention that, as observed in Section~\ref{section-3.2}, these non-degeneracy conditions in action-angle coordinates can be equivalently expressed in terms of the dimension of the space of $T$-periodic solutions of a suitable linearized equation along $x^*$ (in particular, dealing with  the easiest case of the fixed-period problem, condition \eqref{KAMintro} is satisfied if and only if the linearization of \eqref{eq-intro-unper} along the planar solution $x^*$ has a $2$-dimensional space of horizontal $T$-periodic solutions). We also observe that, since the Lusternik--Schnirelmann category of $\mathcal{M}_3$ can be exactly computed (being equal to $5$, see again Proposition~\ref{prop-2.1}), a sharp lower bound for the number of bifurcating solutions can be given in the case of the fixed-period problem, see also Remark~\ref{rem-cat-multiplicity}.

Having reduced the problem of bifurcation from $\mathcal{M}_3$ to a non-degeneracy condition for a planar problem, our results find immediate applications to any three-dimensional equation of the type \eqref{eq-main} for which the non-degeneracy of the unperturbed problem has been established in the planar setting. In particular, in view of the results in \cite{BDF-2} (fixed-period) and \cite{BoDaFe-24ch,BDF-3} (fixed-energy),
in the context of classical mechanics we can deal with spatial perturbations of the Levi-Civita equation 
\begin{equation*}
m \ddot x = -\kappa \dfrac{x}{|x|^{3}} - 2\lambda \dfrac{x}{|x|^4}, \qquad \kappa,\lambda > 0,
\end{equation*}
introduced in \cite{LC-28} as a relativistic correction of the Kepler problem, as well as spatial perturbations of the homogeneous central force problem 
\begin{equation*}
m \ddot x = -\kappa \dfrac{x}{|x|^{\alpha + 2}}, \qquad \kappa > 0,\, \alpha < 2, \,\alpha \notin \{-2,1\}.
\end{equation*}
Incidentally, we point out that this result is essentially optimal, since for $\alpha \geq 2$ the unperturbed problem does not have non-circular periodic solutions \cite[Section~2.b]{AmCo-93}, while the other excluded values of $\alpha$ correspond to the well known degenerate cases of the harmonic oscillator 
($\alpha = -2$) and of the Kepler problem ($\alpha =  1$). As for relativistic mechanics, instead, we can deal with spatial perturbations of the relativistic Kepler problem
\begin{equation*}
\dfrac{\mathrm{d}}{\mathrm{d}t}\left(\frac{m\dot{x}}{\sqrt{1-|\dot{x}|^{2}/c^{2}}}\right) = -\kappa \dfrac{x}{|x|^{3}}, 
\qquad \kappa > 0,
\end{equation*}
whose non-degeneracy in the plane has been proved in \cite{BoDaFe-2021} (fixed-period) and 
\cite{BoDaFe-24ch} (fixed-energy). 
Incidentally, we observe that exploring the non-degeneracy of the relativistic problem with different homogeneous potentials (that is $V(r) = \kappa / (\alpha r^\alpha)$ with $\alpha\neq1$) seems to be a rather delicate issue which could deserve further investigation.

The proof of our main results is performed using the abstract theorem in \cite{AmCoEk-87} (see Theorem~\ref{thastratto1}), providing bifurcation (from a fixed critical manifold) of critical points for a family of $\mathcal{C}^2$-functionals  $\mathcal{F}_\varepsilon$ defined on an open subset of a Hilbert space $H$. It is worth noticing that here, due to the lack of the $\mathcal{C}^2$-smoothness of the Lagrangian action associated with \eqref{eq-main} when $\varphi(v) \neq mv$ (see \cite[Remark~3.3]{BoFePa-25} and \cite{AbSc-01}), we are forced to first pass to the Hamiltonian formulation and then consider the associated Hamiltonian action functional. This strategy has already been used in the papers \cite{BoFePa-25,Ga-19} dealing with the fixed-period problem, and thus considering a functional of the type
\begin{equation*}
 \mathcal{A}_{\varepsilon}(z) = \frac{1}{2}  \int_0^T \langle J z'(t), w(t) \rangle \,\mathrm{d}t- \int_0^T \mathcal{H}_{\varepsilon}(t,z(t))\,\mathrm{d}t,
\end{equation*}
on the fractional Sobolev space $H = H^{1/2}_T$, see Section~\ref{section-3.1} for the details. 
It is worth noticing that, due to the facts that the Hamiltonian $\mathcal{H}_{\varepsilon}$ is not globally defined and that the functions in $H^{1/2}$ are not bounded in general, a suitable cut-off procedure has to be implemented in order to recover $L^\infty$-bounds.
In this paper, similar ideas are used for the fixed-energy problem: in particular 
(see again Section~\ref{section-3.1}) the free-period Hamiltonian action functional
\begin{equation*}
 \mathcal{B}_{\varepsilon}(\zeta,T) =  \frac{1}{2}  \int_0^1 \langle J \zeta'(s), \zeta(s) \rangle \,\mathrm{d}s - T\int_0^1 \bigl( \mathcal{H}_{\varepsilon}(\zeta(s)) - h \bigr)\,\mathrm{d}s
\end{equation*}
 must be considered. We stress that, even if the above functional (also known as Rabinowitz action functional, see \cite{Ra-79}) is very popular in the context of Floer homology \cite{AlFr-12}, its use in the more elementary framework of the fractional Sobolev space $H^{1/2}$ seems to be much less explored. 
 
Within this variational setting, the core of the proof consists in showing that the non-degeneracy condition for the periodic manifold $\mathcal{M}_2$ (which is assumed as a hypothesis) implies that $\mathcal{M}_3$ satisfies the non-degeneracy condition required for the application of the abstract bifurcation result.
To achieve this goal, we exploit the fact that the unperturbed spatial problem is a superintegrable Hamiltonian system with $3$ degrees of freedom, having as independent first integrals the energy and the three components of the vector angular momentum (see Section~\ref{section-2.3} for more details). Accordingly, we make use of a suitable symplectic change of coordinates transforming (locally) the unperturbed problem into a system of the form
\begin{equation*}
\begin{cases}
\, \dot I_i = 0, \qquad \dot \phi_i = \partial_{I_i} \mathcal{K}_0(I_1,I_2), \qquad i=1,2, \vspace{2pt}\\
\, \dot \Xi = 0, \qquad \dot \xi = 0,
\end{cases}
\end{equation*}
where $I_1,I_2,\Xi,\xi$ are real variables and $\phi_1,\phi_2\in\mathbb{T}^{1}$. The above system of coordinates is provided by the Mishchenko--Fomenko theorem for superintegrable systems \cite{MiFo-78} (see also \cite{Fa-05}). Roughly speaking, it can be seen as an extension of the traditional system of action-angle coordinates given by the classical Liouville--Arnold theorem for integrable systems, the novelty being in the presence of the coordinates $(\Xi,\xi)$, which do not form an action-angle pair.
On the other hand, $(I_1,\phi_1,I_2,\phi_2)$ are nothing but the usual action-angle coordinates of the planar problem, with $\mathcal{K}_0$ the associated Hamiltonian. In this way, the non-degeneracy of $\mathcal{M}_3$ is reduced to a condition on the planar Hamiltonian $\mathcal{K}_0$.

We end this introduction by observing the following quite subtle fact. While for the unperturbed planar problem the non-degeneracy condition required for bifurcation from $\mathcal{M}_{2}$ coincides with the non-degeneracy condition of KAM theory (implying the persistence of tori made by quasi-periodic solutions), the unperturbed problem in the $3$-dimensional space is always KAM-degenerate (since it is superintegrable). In spite of this, the non-degeneracy condition required for bifurcation from $\mathcal{M}_{3}$ can be satisfied in some cases, precisely the ones in which $\mathcal{M}_{2}$ is non-degenerate.

\medskip

The plan of the paper is the following. In Section~\ref{section-2.1}, we discuss the Hamiltonian structure of the perturbed equation \eqref{eq-main}. Next, in Section~\ref{section-2+1} we focus our attention on the unperturbed equation \eqref{eq-intro-unper}, in the plane and in the three-dimensional space; in both cases, we illustrate the structure of the manifold of non-circular periodic solutions and the associated action-angles coordinates.
In Section~\ref{section-3}, we deal with general Hamiltonian systems in $\mathbb{R}^{2N}$, presenting a variational formulation and an overview of the notion of non-degeneracy, both for the fixed-period and fixed-energy problems.
Finally, the precise statements and proofs of our main bifurcation results are given in Section~\ref{section-4}.

\medskip

Throughout the paper, by $\vert \cdot \vert$ we denote the Euclidean norm of a vector in $\mathbb{R}^d$, with $d \geq 2$,
and by $\langle \cdot, \cdot \rangle$ the associated scalar product. Dealing with differential equations like the ones above considered, by \textit{circular} solution we always mean a solution $x$ such that
$|x(t)|=R$ for every $t$ (for some $R>0$), while a solution is called \textit{rectilinear} if $x(t)/|x(t)| = c$ for every $t$ (for some versor $c$).
By the \textit{Lusternik--Schnirelmann category} of a topological space $X$, denoted by $\operatorname{cat}(X)$, we mean the least number of open contractible sets needed to cover $X$. 
If $X$ is a binormal ANR (in particular if $X$ is a topological manifold) the above definition coincides with the one usually considered in nonlinear analysis, namely the least number of closed contractible sets needed to cover $X$ (see for instance \cite[Proposition~4.3]{RuSc-03}).

\section{Hamiltonian formulation}\label{section-2.1}

Let us consider equation \eqref{eq-main}. Recalling the vector calculus formula
\begin{equation}\label{eq-vectorcalculus}
(\mathrm{D}_{x}A_{\varepsilon}(t,x))^{\top} y - \mathrm{D}_{x}A_{\varepsilon}(t,x) y = y \wedge \mathrm{curl}_{x} \, A_{\varepsilon}(t,x),
\end{equation}
valid for every $t \in [0,T]$ and $x,y \in \mathbb{R}^3$, it is easily checked that equation \eqref{eq-main} can be written as the following first-order system in $\mathbb{R}^6$
\begin{equation}\label{eq-HS}
\begin{cases}
\, \dot{x} = \varphi^{-1}(p-A_{\varepsilon}(t,x)),
\vspace{4pt}\\
\, \dot{p} = V'(|x|) \dfrac{x}{|x|}+\nabla_{x}U_{\varepsilon}(t,x) + (\mathrm{D}_{x}A_{\varepsilon}(t,x))^{\top}\varphi^{-1}(p-A_{\varepsilon}(t,x)).
\end{cases}
\end{equation}
Notice that $\varphi^{-1} \colon B_{b}(0) \to B_{a}(0)$ is given by
\begin{equation*}
\varphi^{-1}(w) = f^{-1}(|w|) \dfrac{w}{|w|} = \nabla G(|w|), \quad w\neq0,
\end{equation*}
where $G$ is as in \eqref{eq-defG}. From this we immediately deduce that \eqref{eq-HS} is of Hamiltonian type 
\begin{equation}\label{eq-system}
\dot{x} = \nabla_{p} \mathcal{H}_{\varepsilon}(t,x,p),
\qquad
\dot{p} = -\nabla_{x} \mathcal{H}_{\varepsilon}(t,x,p),
\end{equation}
with Hamiltonian function
\begin{equation}\label{def-Ham}
\mathcal{H}_{\varepsilon}(t,x,p) = G(|p-A_{\varepsilon}(t,x)|)-V(|x|)-U_{\varepsilon}(t,x).
\end{equation}
Note that $\mathcal{H}_\varepsilon$ is $T$-periodic in time, and two times continuously differentiable with respect to $(x,p)$ as long as $x \neq 0$ and $p \neq A_{\varepsilon}(t,x)$. The energy $\mathcal{E}_\varepsilon$ defined in \eqref{eq-defenergy} is nothing but the Hamiltonian $\mathcal{H}_\varepsilon$  written in terms of the velocity 
\begin{equation*}
v = \dot x = \varphi^{-1}(p-A_{\varepsilon}(t,x))
\end{equation*}
instead of the momentum $p$, that is
\begin{equation*}
\mathcal{E}_\varepsilon(t,x,v) = \mathcal{H}_{\varepsilon}(t,x,\varphi(v) + A_{\varepsilon}(t,x) ) = G(|\varphi(v)|)-V(|x|)-U_{\varepsilon}(t,x).
\end{equation*}
Incidentally, notice that the function $A_{\varepsilon}$ does not appear explicitly in the definition of $\mathcal{E}_\varepsilon$.
When both $A_{\varepsilon}$ and $U_{\varepsilon}$ do not depend on time (that is, system \eqref{eq-system} is autonomous), the Hamiltonian $\mathcal{H}_{\varepsilon}(t,x,p) = \mathcal{H}_{\varepsilon}(x,p)$ (and, thus, the energy 
$\mathcal{E}_\varepsilon(t,x,v) = \mathcal{E}_\varepsilon(x,v)$) are constant along the solutions of \eqref{eq-system} (note the rather subtle fact that, if $A_\varepsilon$ is time-dependent while $U_\varepsilon$ is not, the energy $\mathcal{E}_\varepsilon(t,x,v)$ does not depend explicitly on the time-variable $t$, but is \textit{not} a first integral).

When $\varepsilon = 0$, one finds the unperturbed (autonomous) 
Hamiltonian
\begin{equation}\label{eq-ham0}
\mathcal{H}_{0}(t,x,p) = \mathcal{H}_0(x,p) = G(|p|)-V(|x|)
\end{equation}
and the unperturbed system
\begin{equation}\label{eq-HS0}
\begin{cases}
\, \dot{x} = \varphi^{-1}(p),
\vspace{4pt}\\
\, \dot{p} = V'(|x|) \dfrac{x}{|x|},
\end{cases}
\qquad (x,p) \in (\mathbb{R}^{d} \setminus \{0\} ) \times \mathbb{R}^d,
\end{equation}
where $d=3$. However, in what follows, we will also need to consider system \eqref{eq-HS0} in the planar case $d=2$.

\begin{remark}
Using formula \eqref{eq-vectorcalculus}, it is immediate to verify that \eqref{eq-main} is the Lagrange equation of the 
Lagrangian $\mathcal{L}_\varepsilon(t,x,\dot x)$ defined in \eqref{eq-deflag}. The Hamiltonian $\mathcal{H}_\varepsilon(t,x,p)$
is, of course, nothing but the Legendre transform (with respect to $v$) of the Lagrangian $\mathcal{L}_\varepsilon(t,x,v)$, that is
\begin{equation*}
\mathcal{H}_\varepsilon(t,x,p) = \bigl(\langle v, p\rangle - \mathcal{L}_\varepsilon(t,x,v)\bigr)|_{v = \varphi^{-1}(p-A_\varepsilon(t,x))}.
\end{equation*}
Indeed, by simple computations the above formula gives
\begin{align*}
\mathcal{H}_\varepsilon(t,x,p) & = \langle \varphi^{-1}(p-A_\varepsilon(t,x)), p-A_\varepsilon(t,x)\rangle - 
F(\vert \varphi^{-1}(p-A_\varepsilon(t,x))\vert) \\
& \quad - V(|x|)-U_{\varepsilon}(t,x) \\
& = f^{-1}(\vert p-A_\varepsilon(t,x) \vert) \vert p-A_\varepsilon(t,x) \vert - F(f^{-1}(\vert p-A_\varepsilon(t,x) \vert)) \\
& \quad - V(|x|)-U_{\varepsilon}(t,x) ,
\end{align*}
and, since
\begin{equation*}
G(s) = f^{-1}(s)s - F(f^{-1}(s)),
\end{equation*}
we immediately recover the expression of $\mathcal{H}_\varepsilon(t,x,p)$ given in \eqref{def-Ham}.
\hfill$\lhd$
\end{remark}

\begin{remark}\label{rem-contiespliciti}
For the reader's convenience, and in order to facilitate possible comparisons with existing results in the literature, we write down below the explicit expressions of the unperturbed Lagrangian, energy and Hamiltonian in the case of classical mechanics ($f(s) = ms$) and relativistic mechanics ($f(s) = ms/\sqrt{1-s^2/c^2}$). Precisely, in the classical case one finds
\begin{equation*}
F(s) = \frac{m}{2}s^2, \quad f^{-1}(s) = \frac{s}{m}, \quad G(s) = \frac{s^{2}}{2m},
\end{equation*}
so that 
\begin{align*}
\mathcal{L}_0(x, v ) & = \frac{m}{2} \vert v \vert^2 + V(\vert x \vert), \\
\mathcal{E}_0(x, v) & =  \frac{m}{2} \vert v \vert^2 - V(\vert x \vert), \\
\mathcal{H}_0(x, p) & = \frac{\vert p \vert^2}{2m} -  V(\vert x \vert).
\end{align*}
On the other hand, in the case of special relativity,
\begin{equation*}
F(s) =mc^2 \left( 1 - \sqrt{1-\frac{s^2}{c^2}}\right), \quad f^{-1}(s) = \frac{s}{m \sqrt{1+s^2/(m^2c^2)}}, 
\end{equation*}
and
\begin{equation*} 
G(s) =mc^2 \left( \sqrt{1+ \frac{s^2}{m^2c^2}}-1\right),
\end{equation*}
so that
\begin{align*}
\mathcal{L}_0(x, v ) & = mc^2 \left( 1 - \sqrt{1-\frac{\vert v\vert^2}{c^2}}\right) + V(\vert x \vert), \\
\mathcal{E}_0(x, v) & =  mc^2\left( \frac{1}{\sqrt{1-\vert v\vert^2/c^2}} -1 \right) - V(\vert x \vert), \\
\mathcal{H}_0(x, p) & = mc^2 \left(\sqrt{1 + \frac{\vert p \vert^2}{m^2 c^2}} - 1 \right) -  V(\vert x \vert).
\end{align*}
Note that, in the non-relativistic limit $c \to +\infty$, all the above functions reduce to the classical ones.
\hfill$\lhd$
\end{remark}

\section{Non-circular periodic solutions and action-angles coordinates}\label{section-2+1}

In this section, we discuss the structure of the manifold of non-circular periodic solutions for the unperturbed system \eqref{eq-HS0} as well as the canonical transformation in action-angle coordinates.
More precisely, after having recalled some well-established facts dealing with the $2$-dimensional case in Section~\ref{section-2.2}, in Section~\ref{section-2.3} we analyze in detail the $3$-dimensional case, providing the crucial tools needed in the proof of our main theorems in Section~\ref{section-4}.

\subsection{The $2$-dimensional case}\label{section-2.2}

Let us focus on the unperturbed system \eqref{eq-HS0} in the case $d=2$, that is, equivalently, the planar equation
\begin{equation}\label{eq-02}
\dfrac{\mathrm{d}}{\mathrm{d}t} \bigl( \varphi(\dot{x}) \bigr) = V'(|x|) \dfrac{x}{|x|}, \qquad x \in \mathbb{R}^2 \setminus\{0\}.
\end{equation}
As well known, in this situation there are two first integrals: the Hamiltonian $\mathcal{H}_0$, cf.~\eqref{eq-ham0}, and the scalar angular momentum
\begin{equation}\label{eq-momentum}
\mathcal{L}(x,p) = \langle Jx, p \rangle = x_1 p_2 - x_2 p_1,
\end{equation}
with $x = (x_1,x_2)$, $p=(p_1,p_2)$ and $J$ the standard symplectic matrix 
\begin{equation}\label{eq-standsym}
J = \begin{pmatrix}
0 & -1 \\
1 & 0
\end{pmatrix}.
\end{equation}
Let us assume now that there is a non-circular non-rectilinear periodic solution $x^*$ of equation \eqref{eq-02}, and let
$(H^*,L^*) \in \mathbb{R}^2$ be the associated values of energy and angular momentum. 
Then, the set of functions made up by time-translations and planar rotations of $x^*$, namely
\begin{equation}\label{def-M2}
\mathcal{M}_2 = \bigl{\{} e^{i\phi} x^*(t-\theta) \colon \phi,\theta \in \mathbb{R} \bigr{\}},
\end{equation}
is a manifold of non-circular non-rectilinear periodic solutions to \eqref{eq-02}, which is homeomorphic to the two-dimensional torus $\mathbb{T}^2$.
We observe that in order to have a one-to-one parametrization of $\mathcal{M}_2$ it is sufficient to take $\phi\in[0,2\pi)$ and $\theta\in[0,\tau^*)$, where $\tau^*$ is the minimal period of $|x^*|$.

Let us consider the map
\begin{equation}\label{eq-projection}
\mathcal{M}_2 \ni x(t) \mapsto \Pi(x(t)) = (x(0),p(0)) = (x(0),\varphi(\dot x(0))) \in \mathbb{R}^4.
\end{equation}
It is easy to check that
\begin{equation*}
\Pi(\mathcal{M}_2) \subset \mathcal{T}_{(H^*,L^*)} 
\coloneqq 
\bigl{\{} (x,p) \in \mathbb{R}^4 \colon \mathcal{H}_0(x,p) = H^*, \, \mathcal{L}(x,p) = L^* \bigr{\}}.
\end{equation*}
Actually, the map $\Pi$ provides a homeomorphism of $\mathcal{M}_2$ onto the connected component $\mathcal{T}^*_{(H^*,L^*)}$ of the level set 
$\mathcal{T}_{(H^*,L^*)}$ which contains the value $(x^*(0),p^*(0))$, where $p^*(0) = \varphi(\dot x^*(0))$. 
Since on $\mathcal{T}^*_{(H^*,L^*)}$ the first integrals $\mathcal{H}_0$ and $\mathcal{L}$
are independent and in involution, the Liouville--Arnold theorem (see, for instance, \cite[Theorem~3.4]{MoZe-05}) implies that
there exist an open set $\mathcal{D}_{\mathbb{R}^2} \subset \mathbb{R}^2$, an open neighborhood $\mathcal{D}'_{\mathbb{R}^2} \subset \mathbb{R}^2$ of $(H^*,L^*)$, an open neighborhood $\mathcal{U}_{\mathbb{R}^4} \subset \mathbb{R}^4$ of $\mathcal{T}^*_{(H^*,L^*)}$, a diffeomorphism
\begin{equation*}
\eta \colon \mathcal{D}_{\mathbb{R}^2} \to \mathcal{D}'_{\mathbb{R}^2}, \qquad (I_1,I_2) \mapsto (\eta_1(I_1,I_2),\eta_2(I_1,I_2)), 
\end{equation*}
and a symplectic diffeomorphism
\begin{equation*}
\Psi \colon \mathcal{D}_{\mathbb{R}^2} \times \mathbb{T}^2 \to \mathcal{U}_{\mathbb{R}^4}, \qquad 
 (I_1,I_2,\phi_1,\phi_2) \mapsto (x,p),
\end{equation*}
satisfying, for every $I = (I_1,I_2) \in \mathcal{D}_{\mathbb{R}^2}$,
\begin{equation*}
\Psi \bigl( \{ I \} \times \mathbb{T}^2 \bigr) =  \bigl{\{} (x,p) \in \mathbb{R}^4 \colon \mathcal{H}_0(x,p) = \eta_1(I), \,\mathcal{L}(x,p) = \eta_2(I)\bigr{\}} \cap \mathcal{U}_{\mathbb{R}^4},
\end{equation*}
and transforming, on $\mathcal{U}_{\mathbb{R}^4}$, system \eqref{eq-02} into the canonical form
\begin{equation}\label{action-angle}
\dot I_i = 0, \qquad \dot \phi_i = \partial_{I_i} \mathcal{K}_0(I_1,I_2), \qquad i=1,2,
\end{equation}
for a suitable function $\mathcal{K}_0 \colon \mathcal{D}_{\mathbb{R}^2} \to \mathbb{R}$. 
Notice that, setting $I^* = \eta^{-1}(H^*,L^*)$, it holds that
\begin{equation}\label{def-Istar}
\Psi\bigl(  \{ I^* \} \times \mathbb{T}^2 \bigr)  = \mathcal{T}^*_{(H^*,L^*)} \cong \mathcal{M}_2.
\end{equation}
The coordinates $(I_1,I_2,\phi_1,\phi_2)$
are the so-called action-angle coordinates. Their construction (in our specific case of system \eqref{eq-HS0} with $d=2$) can be made in a quite explicit way: more details on this are however not needed in the rest of the paper, and we refer to \cite{BeFa-notes,BDF-2} for a thorough discussion.

\subsection{The $3$-dimensional case}\label{section-2.3}

We now turn our attention on system \eqref{eq-HS0} in the case $d=3$, that is, equivalently, the spatial equation
\begin{equation}\label{eq-03}
\dfrac{\mathrm{d}}{\mathrm{d}t} \bigl( \varphi(\dot{x}) \bigr) = V'(|x|) \dfrac{x}{|x|}, \qquad x \in \mathbb{R}^3\setminus\{0\}.
\end{equation}
In this case, there are four independent first integrals: the Hamiltonian $\mathcal{H}_0$ and the three components 
of the (vector) angular momentum
\begin{equation*}
\vec{\mathcal{L}}(x,p) = x \wedge p.
\end{equation*}
Notice that, denoting by $\mathcal{L}_i$ ($i=1,2,3$) the three components of $\vec{\mathcal{L}}$, it holds that $\mathcal{L}_3(x,p) = \mathcal{L}(\pi(x),\pi(p))$, where $\mathcal{L}$ is defined in \eqref{eq-momentum} and 
\begin{equation*}
\pi \colon \mathbb{R}^3 \to \mathbb{R}^2, \qquad (y_1,y_2,y_3) \mapsto (y_1,y_2).
\end{equation*}

Having more first integrals than degrees of freedom, system \eqref{eq-HS0} in the case $d=3$ is a so-called superintegrable system. Similarly to the standard integrable case, also superintegrable systems can be written in a canonical form, which is now in terms of so-called generalized (or partial) action-angle coordinates: the associated theory basically goes back to the Mishchenko--Fomenko theorem \cite{MiFo-78} and has been accurately revisited in \cite{Fa-05}.
In what follows, we briefly describe how these coordinates can be defined in the specific case of system \eqref{eq-HS0}; we refer to \cite{BaFuSa-18, Be-notes} for further details.

Let us assume that there is a non-circular non-rectilinear periodic solution $x^*$ of equation \eqref{eq-03}
which lies on the horizontal plane $x_3 = 0$ and let $(H^*,L^*)$ be the associated values of $\mathcal{H}_0$
and $\mathcal{L}_3$ (notice that $\mathcal{L}_1 = \mathcal{L}_2 = 0$). Let
$\mathcal{S}^*_{(H^*,L^*)}$ be the connected component of the level set
\begin{equation*}
\bigl{\{} (x,p) \in \mathbb{R}^6 \colon  \mathcal{H}_0(x,p) = H^*, \, \vec{\mathcal{L}}(x,p) = (0,0,L^*) \bigr{\}}
\end{equation*}
containing $(x^*(0),p^*(0))$, where as usual $p^*(0) = \varphi(\dot x^*(0))$.
Notice that 
\begin{equation*}
\mathcal{S}^*_{(H^*,L^*)} \subset V_4 \coloneqq \bigl{\{} (x,p) \in \mathbb{R}^6 \colon x_3 = p_3 = 0 \bigr{\}}.
\end{equation*}
By the discussion in Section~\ref{section-2.2}, there are an open set
$\mathcal{D}_{\mathbb{R}^2} \subset \mathbb{R}^2$, an open neighborhood $\mathcal{D}'_{\mathbb{R}^2} \subset \mathbb{R}^2$ of $(H^*,L^*)$, an open neighborhood $\mathcal{U}_{V_4} \subset V_4$ of $\mathcal{S}^*_{(H^*,L^*)}$, a diffeomorphism
\begin{equation*}
\eta \colon \mathcal{D}_{\mathbb{R}^2} \to \mathcal{D}'_{\mathbb{R}^2}, \qquad (I_1,I_2) \mapsto (\eta_1(I_1,I_2),\eta_2(I_1,I_2)), 
\end{equation*}
and a symplectic diffeomorphism
\begin{equation*}
\Psi \colon \mathcal{D}_{\mathbb{R}^2} \times \mathbb{T}^2 \to \mathcal{U}_{V_4}, \qquad 
 (I_1,I_2,\phi_1,\phi_2) \mapsto (x,p),
\end{equation*}
satisfying, for every $I = (I_1,I_2) \in \mathcal{D}_{\mathbb{R}^2}$,
\begin{equation*}
\Psi \bigl( \{ I \} \times \mathbb{T}^2 \bigr) =  \big{\{} (x,p) \in V_4 \colon \mathcal{H}_0(x,p) = \eta_1(I), \mathcal{L}_3(x,p) = \eta_2(I) \bigr{\}} \cap \mathcal{U}_{V_4},
\end{equation*}
and  
transforming system \eqref{eq-HS0} into the form \eqref{action-angle} on the invariant subspace $V_4 \subset \mathbb{R}^6$.
As this point, the transformation $\Psi$ is extended to a symplectic diffeomorphism
\begin{equation}\label{change-variable-3}
\widetilde{\Psi} \colon \mathcal{D}_{\mathbb{R}^2} \times \mathbb{T}^2 \times \mathcal{E}_{\mathbb{R}^2}  \to \mathcal{U}_{\mathbb{R}^6},
\qquad (I_1,I_2,\phi_1,\phi_2,\Xi,\xi) \mapsto (x,p),
\end{equation}
where $\mathcal{U}_{\mathbb{R}^6} \subset \mathbb{R}^6$ is an open neighborhood of $\mathcal{U}_{V_4}$ and $\mathcal{E}_{\mathbb{R}^2} \subset \mathbb{R}^2$ is an open set, in such a way that system \eqref{eq-HS0}
takes the form
\begin{equation}\label{action-angle-3}
\begin{cases}
\, \dot I_i = 0, \qquad \dot \phi_i = \partial_{I_i} \mathcal{K}_0(I_1,I_2), \qquad i=1,2, \vspace{2pt}\\
\, \dot \Xi = 0, \qquad \dot \xi = 0,
\end{cases}
\end{equation}
where $\mathcal{K}_0 \colon \mathcal{D}_{\mathbb{R}^2} \to \mathbb{R}$ is as in formula \eqref{action-angle}. 
The novelty with respect to the integrable case is the presence of the pair of conjugate variables $(\Xi,\xi)$ which, by \eqref{action-angle-3}, remain both constant along the flow.
Without entering into the details, given $(x,p)\in\mathcal{U}_{\mathbb{R}^6}$, the variables $(\Xi,\xi)$ are provided by the plane on which the solution of \eqref{eq-03} with initial conditions $(x(0),\dot{x}(0))=(x,\varphi^{-1}(p))$ lies, see \cite[p.~42]{Be-notes}.
Next, on this plane one can repeat the same construction made on $V_4$ to define the action-angle coordinates $(I_1,I_2,\phi_1,\phi_2)$.
We stress that these partial action-angle coordinates $(I_1,I_2,\phi_1,\phi_2,\Xi,\xi)$ are local in nature, and typically cannot be constructed globally, cf.~Remark~\ref{remark-vario}.

Our next aim is to describe the topology of the manifold
\begin{equation*}
\mathcal{M}_3 = \bigl{\{} M x^*(t-\theta) \colon M\in O(3), \theta \in \mathbb{R} \bigr{\}},
\end{equation*}
composed by all the periodic solutions of \eqref{eq-03} which can be obtained from $x^{*}$ taking into account the invariance by isometries and time-translations.
Notice that $\mathcal{M}_3$ is homeomorphic to the connected component of the level set
\begin{equation*}
\bigl{\{} (x,p) \in \mathbb{R}^6 \colon  \mathcal{H}_0(x,p) = H^*, \, |\vec{\mathcal{L}}(x,p)| = |L^*| \bigr{\}}
\end{equation*}
containing the value $(x^*(0),p^*(0))$, where $p^*(0) = \varphi(\dot x^*(0))$.

\begin{proposition}\label{prop-2.1}
The manifold $\mathcal{M}_3$ is homeomorphic to $SO(3) \times \mathbb{T}^{1}$. In particular, 
\begin{equation*}
\operatorname{dim}(\mathcal{M}_3) = 4  \quad \text{ and } \quad \operatorname{cat}(\mathcal{M}_3) = 5.
\end{equation*}
\end{proposition}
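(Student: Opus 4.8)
The plan is to realize $\mathcal{M}_3$ as a quotient of $SO(3)\times\mathbb{R}$ by a discrete group and then recognize that quotient as a product. Consider the map
\[
\Phi\colon SO(3)\times\mathbb{R}\to\mathcal{M}_3,\qquad (M,\theta)\mapsto M x^*(\,\cdot-\theta),
\]
where $\mathcal{M}_3$ carries the topology induced by the embedding $\Pi$ into phase space; since $\varphi(Mv)=M\varphi(v)$ for every $M\in O(3)$, one has $\Pi(Mx^*(\cdot-\theta))=M\,(x^*(-\theta),p^*(-\theta))$, so $\Phi$ is continuous. First I would show that $\Phi$ is onto. Let $R_0\in O(3)$ be the reflection $(x_1,x_2,x_3)\mapsto(x_1,x_2,-x_3)$ across the orbit plane; then $R_0 x^*\equiv x^*$ (because $x^*_3\equiv0$) and $\det R_0=-1$, so for every $M'\in O(3)$ we have $M'x^*(\cdot-\theta)=(M'R_0)x^*(\cdot-\theta)$, and exactly one of $M',M'R_0$ lies in $SO(3)$. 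Hence the $O(3)$-orbit coincides with the $SO(3)$-orbit, and in particular $\mathcal{M}_3$ is connected.

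Next I would compute the isotropy. Set $\Gamma=\{(N,\sigma)\in SO(3)\times\mathbb{R}\colon N x^*(t-\sigma)=x^*(t)\ \text{for all}\ t\}$. A direct substitution shows that $\Phi(M_1,\theta_1)=\Phi(M_2,\theta_2)$ if and only if $(M_2^{-1}M_1,\theta_1-\theta_2)\in\Gamma$, so $\mathcal{M}_3\cong(SO(3)\times\mathbb{R})/\Gamma$. To identify $\Gamma$ I would use the two non-degeneracy features of $x^*$. Taking norms in $Nx^*(t-\sigma)=x^*(t)$ yields $|x^*(t-\sigma)|=|x^*(t)|$; since $x^*$ is non-circular, $|x^*|$ is a non-constant periodic function of minimal period $\tau^*$, which forces $\sigma\in\tau^*\mathbb{Z}$. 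Writing $\alpha$ for the precession (apsidal) angle swept by $x^*$ over one radial period and $R_\psi\in SO(3)$ for the rotation by angle $\psi$ about the $x_3$-axis, one has $x^*(t+\tau^*)=R_\alpha x^*(t)$, hence $x^*(t-k\tau^*)=R_{-k\alpha}x^*(t)$. Then $NR_{-k\alpha}$ fixes the trace of $x^*$ pointwise; since $x^*$ is non-rectilinear its trace spans the plane $\{x_3=0\}$, so $NR_{-k\alpha}$ is an element of $SO(3)$ restricting to the identity on $\{x_3=0\}$ and hence equals $I$. Therefore $\Gamma=\langle(R_\alpha,\tau^*)\rangle\cong\mathbb{Z}$, acting on $SO(3)\times\mathbb{R}$ freely and properly discontinuously by $k\cdot(M,\theta)=(MR_{k\alpha},\theta+k\tau^*)$.

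The remaining topological step is to untwist this action. I would introduce the diffeomorphism $\Lambda(M,\theta)=(M R_{-\alpha\theta/\tau^*},\theta)$ of $SO(3)\times\mathbb{R}$, built from the one-parameter rotation group $\{R_\psi\}_{\psi\in\mathbb{R}}$ about the $x_3$-axis; a short computation shows that $\Lambda$ conjugates the $\Gamma$-action into $(M,\theta)\mapsto(M,\theta+k\tau^*)$, which is trivial on the $SO(3)$ factor. Consequently $\mathcal{M}_3\cong SO(3)\times(\mathbb{R}/\tau^*\mathbb{Z})=SO(3)\times\mathbb{T}^1$: indeed $\Phi\circ\Lambda^{-1}$ descends to a continuous bijection from the compact space $SO(3)\times\mathbb{T}^1$ onto the Hausdorff space $\mathcal{M}_3$, hence a homeomorphism. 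In particular $\dim\mathcal{M}_3=\dim SO(3)+\dim\mathbb{T}^1=3+1=4$.

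Finally, for the category I would use $SO(3)\cong\mathbb{RP}^3$ together with standard cup-length and product estimates. With $\mathbb{Z}/2$-coefficients, $H^*(\mathbb{RP}^3)=\mathbb{Z}/2[a]/(a^4)$ has cup-length $3$ and $H^*(\mathbb{T}^1)=\mathbb{Z}/2[b]/(b^2)$ has cup-length $1$, so by the Künneth formula $a^3b\neq0$ exhibits a nonzero product of four positive-degree classes; thus the cup-length of $\mathcal{M}_3$ is at least $4$, giving $\operatorname{cat}(\mathcal{M}_3)\geq 5$. On the other hand the product inequality $\operatorname{cat}(X\times Y)\leq\operatorname{cat}(X)+\operatorname{cat}(Y)-1$, together with $\operatorname{cat}(\mathbb{RP}^3)=4$ and $\operatorname{cat}(\mathbb{T}^1)=2$, yields $\operatorname{cat}(\mathcal{M}_3)\leq 5$, whence $\operatorname{cat}(\mathcal{M}_3)=5$. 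I expect the only genuinely delicate point to be the determination of the isotropy group $\Gamma$ — in particular justifying the precession relation $x^*(t+\tau^*)=R_\alpha x^*(t)$ and the spanning property coming from non-rectilinearity; once $\Gamma$ is pinned down, the untwisting and the category computation are routine.
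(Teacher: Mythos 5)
Your proposal is correct, and it takes a genuinely different route from the paper in both halves of the statement — and in the topological half your extra care is not merely stylistic. The paper works directly with the map $\Psi(M,\theta)=Mx^*(\cdot-\theta)$ on $SO(3)\times\mathbb{R}/\tau^*\mathbb{Z}$, proves surjectivity by building an explicit orthonormal frame $M$ from $(x(\theta),p(\theta))$ at the first apex time, proves injectivity by the same norm argument you use, and concludes by compactness. However, because of apsidal precession one has $x^*(\cdot-\tau^*)=R_{-\alpha}x^*$, so unless $\alpha\in 2\pi\mathbb{Z}$ the paper's $\Psi$ is not well defined on the quotient $SO(3)\times\mathbb{R}/\tau^*\mathbb{Z}$ (shifting $\theta$ by $\tau^*$ replaces $M$ by $MR_{-\alpha}$), and its injectivity step — from $\theta_2-\theta_1=k\tau^*$ to $M_2^{-1}M_1x^*=x^*$ — tacitly uses $x^*(\cdot+k\tau^*)=x^*$, which is false in general. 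What the identifications actually produce is the mapping torus $(SO(3)\times\mathbb{R})/\langle(R_\alpha,\tau^*)\rangle$, exactly as in your isotropy computation, and your untwisting map $\Lambda(M,\theta)=(MR_{-\alpha\theta/\tau^*},\theta)$ — which works because the twist lies in a one-parameter subgroup, hence is isotopic to the identity — is precisely the missing step that makes the homeomorphism with $SO(3)\times\mathbb{T}^1$ rigorous. For the category, the paper invokes Montejano's theorem ($\operatorname{cat}(X\times\mathbb{T}^1)=\operatorname{cat}(X)+1$ when $\operatorname{cat}(X)\geq(\dim X+5)/2$) together with $\operatorname{cat}(SO(3))=4$; your argument via the $\mathbb{Z}/2$ cup-length of $\mathbb{RP}^3\times\mathbb{T}^1$ for the lower bound and the product inequality (or simply $\operatorname{cat}\leq\dim+1$) for the upper bound is more elementary and self-contained, and equally valid. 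The two points you flag as delicate are indeed the only ones to write out, and both are standard: the precession relation follows from uniqueness for the Cauchy problem plus conservation of energy and angular momentum at consecutive apices, and the spanning property follows from continuity of $t\mapsto x^*(t)/|x^*(t)|$.
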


\begin{proof}
For convenience, we suppose that the solution $x^*$ of \eqref{eq-03}, which lies in $x_{3}=0$, is the only one that satisfies also $x^*(0)=(\max |x^*|, 0, 0)$ and $L^*>0$ (by the conservation of energy, $p^*(0)$ is thus uniquely determined). Moreover, we denote again by $\tau^*$ the minimal period of $|x^*|$.

Let us consider the continuous map
\begin{align*}
\Psi \colon SO(3) \times \mathbb{R}/\tau^*\mathbb{Z} &\to \mathcal{M}_3
\\
(M,\theta) &\mapsto M x^{*}(\cdot-\theta).
\end{align*}
Since $SO(3)$ and $\mathbb{R}/\tau^*\mathbb{Z}$ are compact Hausdorff spaces, we just need to show that $\Psi$ is bijective.

We first prove that $\Psi$ is onto. 
Given $x\in\mathcal{M}_3$, let $\theta$ be the first non-negative time in which $|x(\theta)| = \max |x| = \max |x^*|$. 
Moreover, let
\begin{equation*}
M = \operatorname{column} \left( \dfrac{x(\theta)}{|x(\theta)|}, \dfrac{p(\theta)}{|p(\theta)|}, \dfrac{x(\theta) \wedge p(\theta)}{|x(\theta)\wedge p(\theta)|}\right),
\end{equation*}
with $p = \varphi(\dot{x})$. 
It is straightforward to check that $M x^{*}(t-\theta) = x(t)$ for all $t$, due to the uniqueness of the solutions of the associated Cauchy problems.

Next we prove that $\Psi$ is injective.
If $M_{1} x^{*}(t-\theta_{1}) = M_{2} x^{*}(t-\theta_{2})$ for all $t$, then $M_{2}^{-1} M_{1} x^{*}(t-\theta_{1}+\theta_{2}) = x^{*}(t)$ for all $t$, where $M_{2}^{-1} M_{1}\in SO(3)$.
As a consequence $-\theta_{1}+\theta_{2} = k \tau^*$, then $\theta_{1}=\theta_{2}$ (mod. $\tau^*$) and $M_{2}^{-1}M_{1} x^{*}(t) = x^{*}(t)$ for all $t$.
Recalling condition \ref{H1}, we deduce also that 
$M_{2}^{-1}M_{1} p^{*}(t)=\varphi(M_{2}^{-1}M_{1} \dot{x}^{*}(t)) = p^{*}(t)$
for all $t$. The two equations $M_{2}^{-1}M_{1} x^{*}(0) = x^{*}(0)$ and $M_{2}^{-1}M_{1} p^{*}(0)= p^{*}(0)$ are enough to show that $M_{2}^{-1} M_{1} = I_{3}$ since $M_{2}^{-1}M_{1}\in SO(3)$.

By \cite[Theorem~2]{Mo-83} (see also \cite{Si-79}) if $X$ is a closed (piecewise linear) manifold with $\operatorname{cat}(X) \geq (\operatorname{dim}(X)+5)/2$, then
$\operatorname{cat}(X\times \mathbb{T}^{1}) = \operatorname{cat}(X) + 1$.
This result can be applied when $X=SO(3)$, since $\operatorname{dim}(SO(3))=3$
and, by \cite[Lemma~4]{BA-23}, $\operatorname{cat}(SO(3))=4$, giving the thesis.
\end{proof}

\begin{remark}\label{remark-vario}
Note that the periodic manifold $\mathcal{M}_3$ contains two families of solutions which lie on the plane $x_3=0$: the time-translations and planar rotations of $x^*$ (namely the manifold $\mathcal{M}_2$ defined in \eqref{def-M2})
as well as the time-translations and planar rotations of $x^*(-t)$ (which form a homeomorphic copy of $\mathcal{M}_2$).
The union of these two families is a disconnected set of planar solutions which is however connected as a set of spatial solutions
(cf.~\cite[Remark~2.5]{BoFePa-25} in the case of circular solutions).

We also emphasize that, since $\mathcal{M}_3$ has nontrivial topology, the change of variable \eqref{change-variable-3} cannot be global.
\hfill$\lhd$
\end{remark}

\section{Variational formulation and non-degeneracy}\label{section-3}

In this section we provide some further auxiliary tools and results to be used in Section~\ref{section-4}. Precisely, dealing with a general Hamiltonian system in $\mathbb{R}^{2N}$, in Section~\ref{section-3.1} we discuss the variational formulation of both the fixed-period and the fixed-energy problem. Then, for both these problems, in Section~\ref{section-3.2} we give an overview of the notion of non-degenerate periodic manifold.

\subsection{Hamiltonian action functionals}\label{section-3.1}

Let us consider the Hamiltonian system 
\begin{equation}\label{eq-hs-general}
Jz' = \nabla_z \mathcal{H}(t,z), \qquad z \in \mathbb{R}^{2N},
\end{equation}
where 
\begin{equation}
J = \begin{pmatrix}
0 & -I_N \\
I_N & 0
\end{pmatrix}
\end{equation}
is the standard symplectic matrix (here $I_N$ stands for the identity matrix in $\mathbb{R}^N$) and $\mathcal{H} \colon \mathbb{R} \times \mathbb{R}^{2N} \to \mathbb{R}$ is a function which is $T$-periodic in the first variable, for some $T > 0$. We also suppose that 
$\mathcal{H}$ is two times differentiable with respect to $z$, 
and that the functions $\mathcal{H},\nabla_z \mathcal{H}, \nabla_z^2 \mathcal{H}$ are continuous on  $\mathbb{R} \times \mathbb{R}^{2N}$ and have at most polynomial growth with respect to $z$.

As carefully discussed in \cite{Ab-01,BaSz-05}, under these assumptions the $T$-periodic problem for \eqref{eq-hs-general} admits a variational formulation in the fractional Sobolev space
\begin{equation*}
H^{1/2}_T = \biggl\{ z = (x,p) \in L^2 ( \mathbb{R}/T\mathbb{Z}, \mathbb{R}^{2d}) \colon 
\sum_{k \in \mathbb{Z}} \vert k \vert \vert \hat{z}_k \vert^2 < +\infty\biggr\},
\end{equation*}
where $\hat{z}_k \in \mathbb{C}^{2d}$ are the complex Fourier coefficients of the function $z$.
Indeed, the bilinear form
\begin{equation*}
\ell_T(z,w) = \int_0^T \langle J z'(t), w(t) \rangle \,\mathrm{d}t,
\end{equation*}
defined for $\mathcal{C}^1$ and $T$-periodic functions $z,w$, admits, by density, a unique continuous extension to a bilinear form on $H^{1/2}_T$, still denoted by  $\ell_T$.
Moreover, by the assumptions on the Hamiltonian $\mathcal{H}$, the nonlinear term $z \mapsto \int_0^T \mathcal{H}(t,z(t))\,\mathrm{d}t$ is well defined and of class $\mathcal{C}^2$ on $H^{1/2}_T$. Then, the (Hamiltonian) action functional $\mathcal{A}\colon H^{1/2}_T \to \mathbb{R}$ given by
\begin{equation}\label{def-actionA}
\mathcal{A}(z) = \frac{1}{2}  \ell_T(z,z) - \int_0^T \mathcal{H}(t,z(t))\,\mathrm{d}t
\end{equation}
is of class $\mathcal{C}^2$ on the Hilbert space $H^{1/2}_T$. Moreover, it is easy to verify that its critical points correspond to $T$-periodic solutions of \eqref{eq-hs-general} in the classical sense. For further convenience, we also observe that the second differential of $\mathcal{A}$ is given by
the bilinear form 
\begin{equation}\label{def-diff2A}
\mathrm{d}^2 \mathcal{A}(z)[w,u] = \ell_T(w,u) - \int_0^T \langle \nabla_z^2\mathcal{H}(t,z(t))w(t), u(t) \rangle \,\mathrm{d}t,
\end{equation}
for $w,u \in H^{1/2}_T$. Moreover, as proved in \cite[pp.~68--69]{Ab-01}, $\mathrm{d}^2 \mathcal{A}(z)$ is a Fredholm operator of index zero, meaning that its self-adjoint realization $F$, namely
\begin{equation*}
\mathrm{d}^2 \mathcal{A}(z)[w,u] = \langle F w,u \rangle_{H^{1/2}_T},
\end{equation*}
is a linear Fredholm operator on $H^{1/2}_T$ of index zero.

When the Hamiltonian does not depend on time, the same functional setting just described allows us to obtain a variational formulation for the fixed-energy periodic problem
\begin{equation}\label{eq-hs-energy}
\begin{cases}
\, Jz' = \nabla \mathcal{H}(z), \\
\, \mathcal{H}(z) = h.
\end{cases}
\end{equation}
Indeed, moving from the observation that, if $z$ is smooth and $T$-periodic, the function
$\zeta(s) = z(Ts)$ is $1$-periodic and
\begin{equation*}
\frac{1}{2} \int_0^T \langle J z'(t), z(t) \rangle \,\mathrm{d}t  - \int_0^T \mathcal{H}(z(t))\,\mathrm{d}t = 
\frac{1}{2} \int_0^1 \langle J \zeta'(s), \zeta(s) \rangle \,\mathrm{d}s  - T\int_0^1 \mathcal{H}(\zeta(s))\,\mathrm{d}s,
\end{equation*}
one can define the so-called \textit{free-period action functional} $\mathcal{B}\colon H^{1/2}_1 \times (0,+\infty) \to \mathbb{R}$ 
as
\begin{equation}\label{def-actionB}
\mathcal{B}(\zeta,T) = \frac{1}{2}  \ell_1(\zeta,\zeta) - T \int_0^1 \bigl(\mathcal{H}(\zeta(s)) - h \bigr)\,\mathrm{d}s.
\end{equation}
Clearly, $\mathcal{B}$ is of class $\mathcal{C}^2$ on $H^{1/2}_1 \times (0,+\infty)$. Moreover, a critical point $(\zeta,T) \in H^{1/2}_1 \times (0,+\infty)$ gives rise to a $T$-periodic solution of \eqref{eq-hs-energy}. Indeed, since
\begin{equation*}
\mathrm{d}_\zeta \mathcal{B}(\zeta,T)[\mu] = \ell_1(\zeta,\mu) - T \int_0^1 \langle \nabla\mathcal{H}(\zeta(s)), \mu(s) \rangle\,\mathrm{d}s,
\end{equation*}
for every $\mu \in H^{1/2}_1$, the function $\zeta$ is a $1$-periodic solution of $J \zeta' = T \nabla\mathcal{H}(\zeta)$ and, thus, 
$z(t) = \zeta(t/T)$ is a $T$-periodic solution of $J z' = \nabla\mathcal{H}(z)$. On the other hand, since
\begin{equation*}
\mathrm{d}_T \mathcal{B}(\zeta,T)[\sigma] = - \sigma \int_0^1 \bigl(\mathcal{H}(\zeta(s))- h \bigr)\,\mathrm{d}s, 
\end{equation*}
for every $\sigma \in \mathbb{R}$, together with the fact that $\mathcal{H}(\zeta(s))$ is constant since $\zeta$ is a solution, we get that $\mathcal{H}(z(t)) = \mathcal{H}(\zeta(s)) \equiv h$.

For further convenience, we also write down below the expressions of the second differential of $\mathcal{B}$. Precisely, for every $\mu,\nu \in H^{1/2}_1$ and $\sigma,\tau \in \mathbb{R}$,
\begin{align*}
\mathrm{d}^2_{\zeta \zeta} \mathcal{B}(\zeta,T)[\mu,\nu] & = \ell_1(\mu,\nu) - T \int_0^1 \langle \nabla^2\mathcal{H}(\zeta(s))\mu(s), \nu(s) \rangle\,\mathrm{d}s, \\
\mathrm{d}^2_{\zeta T} \mathcal{B}(\zeta,T)[\mu,\sigma] & = - \sigma \int_0^1 \langle \nabla\mathcal{H}(\zeta(s)), \mu(s) \rangle\,\mathrm{d}s, \\
\mathrm{d}^2_{T T} \mathcal{B}(\zeta,T)[\sigma,\tau] & = 0,
\end{align*}
implying that
\begin{align}\label{def-diff2B}
\mathrm{d}^2 \mathcal{B}(\zeta,T)[(\mu,\sigma),(\nu,\tau)] & = \ell_1(\mu,\nu) - T \int_0^1 \langle \nabla^2\mathcal{H}(\zeta(s))\mu(s), \nu(s) \rangle\,\mathrm{d}s \\
& \quad - \tau \int_0^1 \langle \nabla\mathcal{H}(\zeta(s)), \mu(s) \rangle\,\mathrm{d}s - \sigma \int_0^1 \langle \nabla\mathcal{H}(\zeta(s)), \nu(s) \rangle\,\mathrm{d}s.
\end{align}
This is a Fredholm operator of index zero. Indeed, let 
\begin{equation*}
S=(S_1,S_2) \colon H^{1/2}_{1}\times\mathbb{R} \to H^{1/2}_{1}\times\mathbb{R}
\end{equation*}
be the linear continuous self-adjoint operator such that
\begin{align*}
\mathrm{d}^2 \mathcal{B}(\zeta,T)[(\mu,\sigma),(\nu,\tau)] 
&= \langle S(\mu,\sigma), (\nu,\tau)\rangle_{H^{1/2}_{1}\times\mathbb{R}} 
\\
&= \langle S_1(\mu,\sigma), \nu\rangle_{H^{1/2}_{1}} + S_2(\mu,\sigma)\tau.
\end{align*}
Comparing with \eqref{def-diff2B} it is apparent that
\begin{equation*}
S(\mu,\sigma) 
=
\begin{pmatrix}
S_1(\mu,\sigma)\\
S_2(\mu,\sigma)
\end{pmatrix}
= 
\begin{pmatrix}
F & G \\
K & 0
\end{pmatrix}
\begin{pmatrix}
\mu \\
\sigma
\end{pmatrix},
\end{equation*}
where
\begin{align*}
&F \colon H^{1/2}_{1} \to H^{1/2}_{1}, &&
\langle F \mu, \nu \rangle_{H^{1/2}_{1}}
= \ell_1(\mu,\nu) - T \langle \nabla^2\mathcal{H}(\zeta)\mu, \nu \rangle_{L^2},
\\
&G \colon \mathbb{R} \to H^{1/2}_{1}, &&\langle G\sigma, \nu \rangle_{H^{1/2}_{1}} = - \sigma \langle \nabla \mathcal{H}(\zeta),\nu\rangle_{L^2},
\\
&K \colon H^{1/2}_{1} \to \mathbb{R}, &&K\mu = - \langle \nabla \mathcal{H}(\zeta),\mu\rangle_{L^2}.
\end{align*}
Since $G$ and $K$ are compact, $S$ is a compact perturbation of the operator
\begin{equation*}
\widetilde{F} \colon H^{1/2}_{1} \times \mathbb{R} \to H^{1/2}_{1} \times \mathbb{R},
\quad
\widetilde{F}(\mu,\sigma) 
=
\begin{pmatrix}
F & 0 \\
0 & 0
\end{pmatrix}
\begin{pmatrix}
\mu \\
\sigma
\end{pmatrix},
\end{equation*}
which is Fredholm of index zero, since $F$ is Fredholm of index zero (by the result for the fixed-period case) and
\begin{equation*}
\ker \widetilde{F} = \ker F \times \mathbb{R},
\qquad
\mathrm{Im} \widetilde{F} = \mathrm{Im} F \times \{0\}.
\end{equation*}
Accordingly, by the stability of the Fredholm index with respect to compact perturbations (see, for instance, \cite[Chapter~IV, \S~5]{Ka-95}), $S$ is Fredholm of index zero, as well.

\subsection{Non-degenerate periodic manifolds}\label{section-3.2}

Let us now consider the autonomous Hamiltonian system 
\begin{equation}\label{eq-hs-general-3.2}
Jz' = \nabla \mathcal{H}(z), \qquad z \in \Omega \subset \mathbb{R}^{2N},
\end{equation}
and assume that $\mathfrak{M} \subset \Omega$ is a compact manifold made up by (initial conditions of) non-constant periodic solutions of 
\eqref{eq-hs-general-3.2} (with a slight abuse of notation, in what follows we systematically identify any $z \in \mathfrak{M}$ with the solution $z(t)$ with $z(0) = z$).  Moreover, we suppose that a smooth choice $\mathfrak{M} \ni z \mapsto T(z)$ can be made for a period of $z$; accordingly, we denote by
\begin{equation*}
P_{z} \colon \mathbb{R}^{2N} \to \mathbb{R}^{2N}
\end{equation*}
the Poincar\'e operator at time $T(z)$ for the linear system $Jw'= \nabla^2 \mathcal{H}(z(t))w$ (that is, the so-called monodromy of $z$).

We start by discussing the (rather standard) notion of non-degeneracy for the fixed-period problem. In the following proposition, we use the action functional $\mathcal{A}$ defined in \eqref{def-actionA} (with $\mathcal{H}(t,z) = \mathcal{H}(z)$): note that in principle the definition is not well-posed, since $\mathcal{H}$ is defined only on $\Omega \subset \mathbb{R}^{2N}$ and 
$H^{1/2}_T$-functions are not bounded in general. In spite of this, we can formally use, for every $z \in \mathfrak{M}$, the second differential $\mathrm{d}^2 \mathcal{A}(z)$, meant as the bilinear form defined in \eqref{def-diff2A}.

\begin{proposition}\label{prop-nondegper}
Assume that $T(z) = T$ for every $z \in \mathfrak{M}$. Then, the following conditions are equivalent:
\begin{itemize}
\item[$(i)$] for every $z \in \mathfrak{M}$, the dimension of $\ker ( I - P_z )$ equals the dimension of $\mathfrak{M}$;
\item[$(ii)$] for every $z \in \mathfrak{M}$, the dimension of the space of $T$-periodic solutions of the linear system
\begin{equation*}
Jw' = \nabla^2 \mathcal{H}(z(t)) w
\end{equation*}
equals the dimension of $\mathfrak{M}$;
\item[$(iii)$] for every $z \in \mathfrak{M}$, the dimension of 
$\ker ( \mathrm{d}^2 \mathcal{A}(z))$ equals the dimension of $\mathfrak{M}$.
\end{itemize}
Moreover, if system \eqref{eq-hs-general-3.2} has the form \eqref{eq-HS0} with $d=2$ 
and $\mathfrak{M} = \Pi(\mathcal{M}_2)$, with $\mathcal{M}_2$ and $\Pi$ defined in \eqref{def-M2} and \eqref{eq-projection} respectively, then the above conditions are equivalent to
\begin{itemize}
\item[$(iv)$] it holds that 
\begin{equation*}
\det \nabla^2 \mathcal{K}_0(I^*) \neq 0,
\end{equation*} 
where $\mathcal{K}_0$ is the Hamiltonian in action-angle coordinates (see \eqref{action-angle}) and $I^*$ is the action corresponding to $\mathcal{M}_2$ via \eqref{def-Istar}.
\end{itemize}
\end{proposition}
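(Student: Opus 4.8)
The plan is to establish the chain of equivalences $(i)\Leftrightarrow(ii)\Leftrightarrow(iii)$ first, as these hold for a general compact periodic manifold, and then to prove the equivalence with $(iv)$ under the specialization to the planar problem \eqref{eq-HS0} with $d=2$. The equivalence $(i)\Leftrightarrow(ii)$ is essentially a restatement: the $T$-periodic solutions of the linearized system $Jw'=\nabla^2\mathcal{H}(z(t))w$ are precisely the initial conditions $w(0)$ fixed by the monodromy, so $\{w(0): w \text{ is a } T\text{-periodic solution}\}$ is canonically isomorphic to $\ker(I-P_z)$. Thus both dimensions coincide for every $z\in\mathfrak{M}$, and each equals $\dim\mathfrak{M}$ simultaneously.

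For $(ii)\Leftrightarrow(iii)$, I would unpack the definition of the second differential $\mathrm{d}^2\mathcal{A}(z)$ in \eqref{def-diff2A} and of its self-adjoint realization $F$. A function $w\in H^{1/2}_T$ lies in $\ker(\mathrm{d}^2\mathcal{A}(z))$ if and only if $\ell_T(w,u)=\int_0^T\langle\nabla^2\mathcal{H}(t,z(t))w,u\rangle\,\mathrm{d}t$ for all $u\in H^{1/2}_T$. Integrating the bilinear form $\ell_T$ by parts and arguing by density (using that $\mathrm{d}^2\mathcal{A}(z)$ is Fredholm, so its kernel consists of suitably regular functions), this weak identity is exactly the weak formulation of $Jw'=\nabla^2\mathcal{H}(z(t))w$ together with $T$-periodicity. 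Hence $\ker(\mathrm{d}^2\mathcal{A}(z))$ is isomorphic to the space of $T$-periodic solutions of the linearized system, giving the equality of dimensions. This is a standard computation in the $H^{1/2}$ variational framework, and I would cite \cite{Ab-01,BaSz-05} for the regularity needed to pass from the weak to the classical formulation.

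The genuinely substantive step is $(ii)\Leftrightarrow(iv)$. Here I would exploit the symplectic change of coordinates $\Psi$ from Section~\ref{section-2.2} transforming \eqref{eq-02} into the action-angle form \eqref{action-angle}. Since $\Psi$ is a symplectic diffeomorphism, it conjugates the linearized flow along $z^*=\Pi(x^*)$ to the linearized flow of the action-angle system along the corresponding constant-action solution. In action-angle coordinates, the linearization of $\dot I_i=0$, $\dot\phi_i=\partial_{I_i}\mathcal{K}_0(I)$ is the constant-coefficient system $\dot{\delta I}=0$, $\dot{\delta\phi}=\nabla^2\mathcal{K}_0(I^*)\,\delta I$, whose time-$T$ monodromy is the block-triangular matrix $\begin{pmatrix} I_2 & 0 \\ T\,\nabla^2\mathcal{K}_0(I^*) & I_2\end{pmatrix}$. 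The dimension of the space of $T$-periodic solutions of this system is $2+\dim\ker\nabla^2\mathcal{K}_0(I^*)$: the $\delta I$-directions are always periodic, while a $\delta\phi$-shift forced by a nonzero $\delta I$ closes up after time $T$ exactly when $T\,\nabla^2\mathcal{K}_0(I^*)\,\delta I\in(2\pi\mathbb{Z})^2$, which for the linearization means $\nabla^2\mathcal{K}_0(I^*)\,\delta I=0$. Since $\dim\mathfrak{M}=\dim\Pi(\mathcal{M}_2)=2$, condition $(ii)$ reads $2+\dim\ker\nabla^2\mathcal{K}_0(I^*)=2$, i.e.\ $\ker\nabla^2\mathcal{K}_0(I^*)=\{0\}$, which is precisely $\det\nabla^2\mathcal{K}_0(I^*)\neq0$.

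The main obstacle I anticipate is the careful bookkeeping in $(ii)\Leftrightarrow(iv)$: one must verify that the period $T$ used in defining $\mathfrak{M}$ is a common period for all solutions on the torus $\{I^*\}\times\mathbb{T}^2$ (the hypothesis $T(z)\equiv T$ ensures this), and one must correctly handle the angle variables living on $\mathbb{T}^1$ rather than $\mathbb{R}$, so that ``periodicity'' of the linearized solution is interpreted modulo the lattice. I would be careful to note that the two trivially-periodic directions correspond to the two-dimensional tangent space of $\mathcal{M}_2$ (the flow direction and the angular-momentum/energy direction along the torus), matching $\dim\mathfrak{M}=2$, and that the resonance condition collapses to the linear condition $\nabla^2\mathcal{K}_0(I^*)\,\delta I=0$ precisely because we linearize. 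The non-degeneracy is then the statement that no extra periodic directions appear beyond the tangent space of $\mathfrak{M}$.
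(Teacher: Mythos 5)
Your proposal is correct and takes essentially the same route as the paper: the equivalences $(i)$--$(iii)$ are treated as straightforward/standard, and $(iv)$ is obtained by linearizing the action-angle system \eqref{action-angle} along $\Pi(\mathcal{M}_2)$, computing the block-triangular monodromy $Q(X,Y)=(X,\,T\nabla^2\mathcal{K}_0(I^*)X+Y)$, counting $\dim\ker(I-Q)=2+\operatorname{nullity}\bigl(\nabla^2\mathcal{K}_0(I^*)\bigr)$, and transferring back through the symplectic change of coordinates (for which the paper cites \cite[Appendix~A]{BoFePa-25}). One small remark: the ``modulo lattice'' caveat you raise is unnecessary, since the variational equation lives on the tangent space, which is a vector space, so $T$-periodicity of linearized solutions is plain equality --- exactly the linear condition $\nabla^2\mathcal{K}_0(I^*)\,\delta I=0$ you arrive at.
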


According to the above proposition, a manifold $\mathfrak{M}$ is called \textit{non-degenerate for the fixed-period problem} if it satisfies one (and, hence, all) of the conditions $(i)$, $(ii)$ and $(iii)$ therein.

\begin{proof}[Proof of Proposition~\ref{prop-nondegper}]
The equivalence between $(i)$, $(ii)$ and $(iii)$ is straightforward. As for $(iv)$,
we observe that the linearization of \eqref{action-angle} along any solution of $\mathfrak{M}= \Pi(\mathcal{M}_2)$ is given by
\begin{equation*}
\dot X = 0, \qquad \dot Y = \nabla^2 \mathcal{K}_{0}(I^*)X, \qquad (X,Y) \in \mathbb{R}^2 \times \mathbb{R}^2.
\end{equation*}
Thus the monodromy $Q\colon \mathbb{R}^4 \to \mathbb{R}^4$ is given by
\begin{equation*}
Q(X,Y) = (X, T\nabla^2 \mathcal{K}_{0}(I^*)X + Y)
\end{equation*}
and therefore
\begin{equation*}
\operatorname{dim} ( \ker ( I -Q ) ) = 2 + \operatorname{nullity} \bigl( \nabla^2 \mathcal{K}_{0}(I^*)\bigr).
\end{equation*}
Since $\Pi(\mathcal{M}_2)$ has dimension two, we obtain
\begin{equation*}
\operatorname{dim} \,( \ker ( I -Q ) ) = \operatorname{dim} \,(\Pi(\mathcal{M}_2) ) \quad \Longleftrightarrow \quad \det \nabla^2 \mathcal{K}_0(I^*) \neq 0.
\end{equation*}
Recalling that $\ker (I-Q)$ and $\ker ( I - P_z)$ are isomorphic (see, for instance, \cite[Appendix~A]{BoFePa-25}), we infer the equivalence between $(iv)$ and $(i)$.
\end{proof}

We now turn our attention to the less popular fixed-energy problem. Here, we formally use the free-period action functional $\mathcal{B}$ defined in \eqref{def-actionB} and its second differential $\mathrm{d}^2 \mathcal{B}$ as computed in \eqref{def-diff2B} (the same considerations given before Proposition~\ref{prop-nondegper} about the well-posedness of these definitions can be repeated here).

\begin{proposition}\label{prop-nondegene}
Assume that $\mathfrak{M} \subset \mathcal{H}^{-1}(h)$ for some $h \in \mathbb{R}$. Then, the following conditions are equivalent:
\begin{itemize}
\item[$(i)$] for every $z \in \mathfrak{M}$, the dimension of the linear space 
\begin{equation*}
\mathcal{F} = \bigl{\{} w \in T_z (\mathcal{H}^{-1}(h)) \colon w = P_z w + \lambda J\nabla \mathcal{H}(z), \text{ for some $\lambda \in \mathbb{R}$} \bigr{\}}
\end{equation*}
equals the dimension of $\mathfrak{M}$ (here $T_z$ means the tangent space); 
\item[$(ii)$] for every $z \in \mathfrak{M}$, the dimension of the linear space of $T(z)$-periodic solutions of the problem
\begin{equation}\label{linear-syst-fix-en}
\begin{cases}
\, Jw' = \nabla^2 \mathcal{H}(z(t)) w + \gamma \nabla \mathcal{H}(z(t)), \; \text{ for some $\gamma \in \mathbb{R}$,}
\\
\, \langle \nabla\mathcal{H}(z(t)),w \rangle \equiv 0,
\end{cases}
\end{equation}
equals the dimension of $\mathfrak{M}$;
\item[$(iii)$] for every $z \in \mathfrak{M}$, the dimension of 
$\ker ( \mathrm{d}^2 \mathcal{B}(\zeta,T(z)) )$ equals the dimension of $\mathfrak{M}$, where 
$\zeta(s) = z(T(z)s)$.
\end{itemize}
Moreover, if system \eqref{eq-hs-general-3.2} has the form \eqref{eq-HS0} with $d=2$ 
and $\mathfrak{M} = \Pi(\mathcal{M}_2)$, with $\mathcal{M}_2$ and $\Pi$ defined in \eqref{def-M2} and \eqref{eq-projection} respectively, then the above conditions are equivalent to
\begin{itemize}
\item[$(iv)$] it holds that
\begin{equation*}
\operatorname{det}
\begin{pmatrix}
\nabla^2 \mathcal{K}_{0}(I^{*}) & \nabla \mathcal{K}_{0}(I^{*})^{\top} \vspace{3pt} \\
\nabla \mathcal{K}_{0}(I^{*}) & 0
\end{pmatrix} 
\neq 0,
\end{equation*}
where $\mathcal{K}_0$ is the Hamiltonian in action-angle coordinates (see \eqref{action-angle}) and $I^*$ is the action corresponding to $\mathcal{M}_2$ via \eqref{def-Istar}.
\end{itemize}
\end{proposition}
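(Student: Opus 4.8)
The plan is to prove the chain $(i) \Leftrightarrow (ii) \Leftrightarrow (iii)$ for the general autonomous system \eqref{eq-hs-general-3.2}, and then to reduce the planar case to $(iv)$ following the scheme of Proposition~\ref{prop-nondegper}. The tool used repeatedly is the elementary conservation law: if $w$ solves the inhomogeneous linearized system $Jw' = \nabla^2\mathcal{H}(z(t))w + \gamma\nabla\mathcal{H}(z(t))$, then $\frac{\mathrm{d}}{\mathrm{d}t}\langle \nabla\mathcal{H}(z(t)), w(t)\rangle \equiv 0$. Indeed, differentiating and using $z' = -J\nabla\mathcal{H}(z)$, the symmetry of $\nabla^2\mathcal{H}$ and the antisymmetry of $J$, one checks that all terms cancel (the inhomogeneity contributes $\langle \nabla\mathcal{H}(z), J\nabla\mathcal{H}(z)\rangle = 0$). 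Hence $\langle \nabla\mathcal{H}(z(t)), w(t)\rangle$ is constant along such solutions, so the pointwise constraint in $(ii)$ is equivalent to the single condition $\langle \nabla\mathcal{H}(z(0)), w(0)\rangle = 0$ and also to the integral condition $\int_0^{T(z)} \langle \nabla\mathcal{H}(z(t)), w(t)\rangle \, \mathrm{d}t = 0$.

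For $(i)\Leftrightarrow(ii)$, I would first note that $u(t) = t\, z'(t)$ is a particular solution of $Jw' = \nabla^2\mathcal{H}(z)w + \nabla\mathcal{H}(z)$, using $\nabla\mathcal{H}(z) = Jz'$ and $Jz'' = \nabla^2\mathcal{H}(z)z'$. Thus every solution of the inhomogeneous system with parameter $\gamma$ is uniquely $w(t) = \gamma t\, z'(t) + w_h(t)$, with $w_h$ solving the homogeneous linearized system. Imposing $T(z)$-periodicity and using $z'(T(z)) = z'(0)$ gives $(P_z - I)w_h(0) = -\gamma\, T(z)\, z'(0)$; since $J\nabla\mathcal{H}(z(0)) = -z'(0)$ and $w(0) = w_h(0)$, the choice $\lambda = -\gamma\, T(z)$ shows that $w(0)$ fulfils the relation defining $\mathcal{F}$, while the constraint transfers by the conservation law (the term $\gamma t\, z'$ contributes nothing to the pairing, as $\langle Jz', z'\rangle = 0$). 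The assignment $w(\cdot) \mapsto w(0)$ is then a linear bijection from the space of solutions of $(ii)$ onto $\mathcal{F}$: it is injective because $w(0)$ determines $w_h$ through the homogeneous flow and then $\gamma$ through $(P_z - I)w_h(0) = -\gamma\,T(z)\,z'(0)$ (as $z'(0) \neq 0$ for a non-constant solution), and surjective by reversing the construction. Therefore the two spaces have equal dimension, and $\ker(I-P_z)$ being isomorphic to its action-angle counterpart is taken from \cite[Appendix~A]{BoFePa-25}.

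For $(ii)\Leftrightarrow(iii)$, I would read off $\ker(\mathrm{d}^2\mathcal{B}(\zeta,T(z)))$ directly from \eqref{def-diff2B}. Testing against $(\nu,\tau)$ with $\nu = 0$ forces $\int_0^1 \langle \nabla\mathcal{H}(\zeta), \mu\rangle\,\mathrm{d}s = 0$, while $\tau = 0$ with arbitrary $\nu$ yields, in weak and hence classical form, $J\mu' = T(z)\nabla^2\mathcal{H}(\zeta)\mu + \sigma\nabla\mathcal{H}(\zeta)$. The rescaling $w(t) = \mu(t/T(z))$, $\gamma = \sigma/T(z)$, transforms this into the system of $(ii)$, and the conservation law promotes the integral constraint to the pointwise one. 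Since $(\mu,\sigma)\mapsto w$ is a linear isomorphism onto the solution space of $(ii)$, the kernel dimension equals the dimension of that space, giving $(ii)\Leftrightarrow(iii)$.

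Finally, for the planar equivalence $(i)\Leftrightarrow(iv)$, I would pass to action-angle coordinates through the symplectic diffeomorphism $\Psi$, exactly as in Proposition~\ref{prop-nondegper}: along a solution of $\Pi(\mathcal{M}_2)$ the linearized system reads $\dot X = 0$, $\dot Y = \nabla^2\mathcal{K}_0(I^*)X$, with monodromy $Q(X,Y) = (X, T\nabla^2\mathcal{K}_0(I^*)X + Y)$, where $(X,Y) = (\delta I, \delta\phi)$; moreover the reference velocity is $(0,\nabla\mathcal{K}_0(I^*))$, so $J\nabla\mathcal{H}$ corresponds to $-(0,\nabla\mathcal{K}_0(I^*))$, and the energy constraint becomes $\langle \nabla\mathcal{K}_0(I^*), X\rangle = 0$. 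Using $(I-Q)(X,Y) = (0, -T\nabla^2\mathcal{K}_0(I^*)X)$ in the defining relation of $\mathcal{F}$ gives the conditions $\langle \nabla\mathcal{K}_0(I^*), X\rangle = 0$ and $\nabla^2\mathcal{K}_0(I^*)X = \lambda\nabla\mathcal{K}_0(I^*)$ for some $\lambda$, with $Y \in \mathbb{R}^2$ free; hence $\dim\mathcal{F} = 2 + \dim\{X \colon \ldots\}$. As $\dim\mathfrak{M} = \dim\Pi(\mathcal{M}_2) = 2$, condition $(i)$ is equivalent to $\{X \colon \ldots\} = \{0\}$. Regarding these two conditions on $X$ as a single homogeneous bordered system in the unknown $(X,\lambda)$ and using $\nabla\mathcal{K}_0(I^*) \neq 0$ (the frequencies of a non-circular non-rectilinear solution do not vanish, so the correspondence $(X,\lambda)\mapsto X$ is injective), this system admits only the trivial solution precisely when the bordered matrix in $(iv)$ is nonsingular, which is the desired equivalence. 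I expect the main obstacle to lie in the bookkeeping of $(i)\Leftrightarrow(ii)$ — especially in verifying that $\gamma$ (equivalently $\lambda$) is uniquely recovered, so that $w(\cdot)\mapsto w(0)$ is genuinely bijective — together with the systematic use of the conservation law to move between the pointwise and integral forms of the energy constraint.
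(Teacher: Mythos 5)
Your proposal is correct and follows essentially the same route as the paper's proof: the correspondence $w \mapsto w(0)$ between periodic solutions of \eqref{linear-syst-fix-en} and $\mathcal{F}$ (your decomposition $w = \gamma t z' + w_h$ is the same as the paper's substitution $\widetilde{w} = w + \gamma t J\nabla\mathcal{H}(z)$, since $J\nabla\mathcal{H}(z) = -z'$), the identical reading of $\ker(\mathrm{d}^2\mathcal{B})$ from \eqref{def-diff2B} with the conservation law promoting the integral constraint to the pointwise one, and the same action-angle linearization with the bordered-matrix argument for $(iv)$. Your extra care about the unique recovery of $\gamma$ (via $z'(0)\neq 0$), which the paper leaves implicit, is a welcome but inessential refinement.
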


According to the above proposition, a manifold $\mathfrak{M}$ is called \textit{non-degenerate for the fixed-energy problem} if it satisfies one (and, hence, all) of the conditions $(i)$, $(ii)$ and $(iii)$ therein. 
The equivalence between these three conditions has been already discussed in \cite{We-78}; however for the reader's convenience we review these arguments in the proof below.

\begin{proof}[Proof of Proposition~\ref{prop-nondegene}]
In order to prove that $(i)$ and $(ii)$ are equivalent, we show that $w$ is a $T(z)$-periodic solution of \eqref{linear-syst-fix-en} if and only if $w(0)\in\mathcal{F}$. In fact, if $w$ is a $T(z)$-periodic solution of \eqref{linear-syst-fix-en}, then
$\widetilde{w}(t) \coloneqq w(t) + \gamma t J \nabla \mathcal{H}(z(t))$ satisfies $J \widetilde{w}' = \nabla^{2}\mathcal{H}(z(t)) \widetilde{w}$, and so $\widetilde{w}(T(z)) = P_z \widetilde{w}(0)$. Moreover, 
\begin{equation*}
\widetilde{w}(0)=w(0), \quad \widetilde{w}(T(z))=w(T(z))+\gamma T(z) J \nabla \mathcal{H}(z(0)),
\end{equation*}
and
\begin{equation*}
(I-P_z)w(0) = \widetilde{w}(0)-\widetilde{w}(T(z)) = -\gamma T(z) J \nabla \mathcal{H}(z(0)).
\end{equation*}
Furthermore, $\langle \nabla \mathcal{H}(z(0)), w(0) \rangle =0$, finally proving that $w(0)\in \mathcal{F}$.
On the other hand, if $w_{0}\in\mathcal{F}$ and $\widetilde{w}$ is the solution of the Cauchy problem
\begin{equation*}
\begin{cases}
\, J \widetilde{w}' = \nabla^{2} \mathcal{H}(z(t)) \widetilde{w},
\\
\, \widetilde{w}(0) = w_{0},
\end{cases}
\end{equation*}
the function $w(t) \coloneqq \widetilde{w}(t)- \lambda t J \nabla \mathcal{H}(z(t))$ satisfies
\begin{equation*}
J w' = \nabla^{2} \mathcal{H}(z(t)) w + \lambda\nabla \mathcal{H}(z(t)),
\end{equation*}
and 
\begin{equation*}
w(T(z)) = \widetilde{w}(T(z)) - \lambda T(z) J \nabla \mathcal{H}(z(T(z)))
= P_z w_{0} - \lambda T(z) J \nabla \mathcal{H}(z(0))
= w_{0} = w(0).
\end{equation*}
Straightforward computations show that
\begin{equation}\label{eq-dtH}
\dfrac{\mathrm{d}}{\mathrm{d}t} \langle \nabla\mathcal{H}(z(t)), w(t) \rangle = 0,
\end{equation}
hence $\langle \nabla\mathcal{H}(z(t)), w(t) \rangle = \langle \nabla\mathcal{H}(z(0)), w_0 \rangle = 0$ for all $t$. We conclude that $w$ is a $T(z)$-periodic solution of \eqref{linear-syst-fix-en}.

We now show the equivalence between $(ii)$ and $(iii)$. Let $\mu\in H^{1/2}_{1}$ and $\sigma\in\mathbb{R}$ be such that
$(\mu,\sigma)\in\ker ( \mathrm{d}^2 \mathcal{B}(\zeta,T(z)) )$, that is 
\begin{equation}\label{eq-ker-d2}
\mathrm{d}^2 \mathcal{B}(\zeta,T(z))[(\mu,\sigma),(\nu,\tau)] = 0
\end{equation}
for every $\nu\in H^{1/2}_{1}$ and $\tau\in\mathbb{R}$, cf.~\eqref{def-diff2B}.
In particular, taking $\tau=0$, we find that
\begin{equation*} 
\ell_1(\mu,\nu) - T(z) \int_0^1 \langle \nabla^2\mathcal{H}(\zeta(s))\mu(s), \nu(s) \rangle\,\mathrm{d}s \\
- \sigma \int_0^1 \langle \nabla\mathcal{H}(\zeta(s)), \nu(s) \rangle\,\mathrm{d}s = 0
\end{equation*}
for every $\nu\in H^{1/2}_{1}$, which means that $\mu$ is a $1$-periodic solution of
\begin{equation*} 
J\mu' = T(z) \nabla^{2} \mathcal{H}(z(s))\mu + \sigma \nabla\mathcal{H}(z(s)).
\end{equation*}
Therefore, $w(t) \coloneqq \mu(t/T(z))$ is a $T(z)$-periodic solution of the differential equation in \eqref{linear-syst-fix-en} with $\gamma=\sigma/T(z)$. Next, taking $\nu=0$ in \eqref{eq-ker-d2}, we find that
\begin{equation*} 
0 = \int_0^1 \langle \nabla\mathcal{H}(\zeta(s)), \mu(s) \rangle\,\mathrm{d}s = \int_0^{T(z)} \langle \nabla\mathcal{H}(z(t)), w(t) \rangle\,\mathrm{d}t.
\end{equation*}
Observing that the integrand is constant (cf.~\eqref{eq-dtH}), we infer that $w$ satisfies the second equation in \eqref{linear-syst-fix-en} too. 
On the other hand, if $w$ is a $T(z)$-periodic solution of \eqref{linear-syst-fix-en} for some $\gamma \in \mathbb{R}$, then the same argument shows that $(\mu(s),\sigma)=(w(T(z) s), T(z) \gamma)$ belongs to $\ker ( \mathrm{d}^2 \mathcal{B}(\zeta,T(z)))$.
Summing up, $(ii)$ and $(iii)$ are equivalent.

Let us finally deal with $(iv)$. As already observed in the proof of Proposition~\ref{prop-nondegper}, the linearization of \eqref{action-angle} along a solution of $\mathfrak{M} = \Pi(\mathcal{M}_2)$ (note that the period is constant along the manifold) yields the monodromy
\begin{equation*}
Q(X,Y) = (X, T \nabla^2 \mathcal{K}_0(I^*)X + Y).
\end{equation*}
Let us consider the linear space
\begin{equation*}
\mathcal{G} = \bigl{\{} (X,Y) \in T_{(I^*,\phi)}\mathcal{K}_0^{-1}(h) \colon (X,Y) = Q(X,Y) + \lambda (0,\nabla \mathcal{K}_0(I^*)), \text{ for some $\lambda \in \mathbb{R}$} \bigr{\}},
\end{equation*}
where (meaning, with a slight abuse, $\mathcal{K}_0$ as a function of both $I$ and $\phi$)
\begin{equation*}
T_{(I^*,\phi)}\mathcal{K}_0^{-1}(h) = \bigl{\{} (X,Y) \in \mathbb{R}^2 \times \mathbb{R}^2 \colon \langle \nabla 
\mathcal{K}_0(I^*),X\rangle = 0 \bigr{\}}.
\end{equation*}
Recalling that $\Pi(\mathcal{M}_2)$ has dimension two, we claim that 
\begin{equation*}
\operatorname{dim} \mathcal{G} = \operatorname{dim} \, (\Pi(\mathcal{M}_2)) \quad \Longleftrightarrow \quad \operatorname{det}
\begin{pmatrix}
\nabla^2 \mathcal{K}_{0}(I^{*}) & \nabla \mathcal{K}_{0}(I^{*})^{\top} \vspace{3pt} \\
\nabla \mathcal{K}_{0}(I^{*}) & 0
\end{pmatrix} 
\neq 0.
\end{equation*}
Indeed, $\operatorname{dim} \mathcal{G} \geq 2$ since $(0,Y) \in \mathcal{G}$ for every $Y \in \mathbb{R}^2$; 
hence, $\dim \mathcal{G} > 2$ if and only if there are $\lambda \in \mathbb{R}$ and $X \in \mathbb{R}^2 \setminus \{0\}$ such that
\begin{equation*}
\langle \nabla \mathcal{K}_0(I^*), X \rangle = 0 \quad \text{ and } \quad \nabla^2 \mathcal{K}_0(I^*) X = \lambda \nabla \mathcal{K}_0(I^*).
\end{equation*}
This is equivalent to say that the $(2+1) \times (2+1)$ homogeneous linear system defined by the matrix
\begin{equation*}
\begin{pmatrix}
\nabla^2 \mathcal{K}_{0}(I^{*}) & \nabla \mathcal{K}_{0}(I^{*})^{\top} \vspace{3pt} \\
\nabla \mathcal{K}_{0}(I^{*}) & 0
\end{pmatrix} 
\end{equation*}
admits non-trivial solutions (note that a non-trivial solution of the form $(\lambda^*,0)$ cannot exist, since 
$\nabla \mathcal{K}_{0}(I^{*}) \neq 0$), from which the claim follows. Since $\mathcal{F}$ and $\mathcal{G}$ are isomorphic (see \cite[Appendix~A]{BoFePa-25}), we infer the equivalence between $(iv)$ and $(i)$.
\end{proof}

\begin{remark}\label{rem-Lagrangian-non-degeneracy}
If the Hamiltonian system \eqref{eq-hs-general-3.2} comes, via Legendre transformation, from a Lagrangian system (as in the case of system \eqref{eq-HS0}), a further equivalent formulation of non-degeneracy can be given in terms of the kernel of the second differential of the Lagrangian action functional
\begin{equation*}
\mathcal{I}(x) = \int_0^T \mathcal{L}(x(t),\dot x(t))\,\mathrm{d}t, \qquad \text{$x$ is $T$-periodic,}
\end{equation*}
and of the free-period Lagrangian action functional
\begin{equation*}
\mathcal{J}(\gamma,T) = T \int_0^1 \bigl(\mathcal{L}(\gamma(s),\dot \gamma(s)/T) - h \bigr)\,\mathrm{d}s, \qquad \text{$\gamma$ is $1$-periodic,}
\end{equation*}
respectively (computations in the fixed-energy case are slightly more delicate, since the functional $\mathcal{J}$ depends in a more involved manner from the variable $T$, see for instance \cite[Appendix~A]{AbMaPa-15}).

We also point out that, for a Lagrangian system of the type $\ddot{x} = \nabla W(x)$, the fixed-energy problem can be formulated as a critical point problem for the Maupertuis functional 
\begin{equation*}
\mathcal{M}(\gamma) = \int_{0}^{1} |\dot \gamma(s)|^2 \, \mathrm{d}s \int_{0}^{1} (W(\gamma(s))+h)\, \mathrm{d}s, \qquad \text{$\gamma$ is $1$-periodic.}
\end{equation*}
Accordingly a further notion of non-degeneracy arises considering the kernel of the second differential of $\mathcal{M}$, see \cite{AmBe-92}: it can be checked that this notion agrees with the previous ones as well.
In the relativistic case, a Maupertuis-type functional has been introduced in \cite{BoDaMH-23}: the equivalence of the associated non-degeneracy conditions could be investigated.
\hfill$\lhd$
\end{remark}

\section{Statement and proof of the main results}\label{section-4}

In this section we state and prove our main results dealing with the equation
\begin{equation}\label{eq-main-bis}
\dfrac{\mathrm{d}}{\mathrm{d}t} \bigl( \varphi(\dot{x}) \bigr) = V'(|x|) \dfrac{x}{|x|} + E_{\varepsilon}(t,x)+\dot{x} \wedge B_{\varepsilon}(t,x), \qquad x \in \mathbb{R}^3 \setminus \{0\}, 
\end{equation}
where, of course, we continue to assume the hypotheses \ref{H1}, \ref{H2} and \ref{H3} listed in the Introduction.
More precisely, in what follows we consider both the $T$-periodic problem 
\begin{equation}\label{eq-main-periodo}
\begin{cases}
\, \dfrac{\mathrm{d}}{\mathrm{d}t} \bigl( \varphi(\dot{x}) \bigr) = V'(|x|) \dfrac{x}{|x|} + E_{\varepsilon}(t,x)+\dot{x} \wedge B_{\varepsilon}(t,x), \qquad x \in \mathbb{R}^3 \setminus \{0\}, 
\vspace{2pt}\\
\, (x(0),x(T)) =  (\dot x(0),\dot x(T)),
\end{cases}
\end{equation}
as well as, in the case when $U_\varepsilon$ and $A_\varepsilon$ do not depend on time,
the fixed-energy periodic problem
\begin{equation}\label{eq-main-energia}
\begin{cases}
\, \dfrac{\mathrm{d}}{\mathrm{d}t} \bigl( \varphi(\dot{x}) \bigr) = V'(|x|) \dfrac{x}{|x|} + E_{\varepsilon}(x)+\dot{x} \wedge B_{\varepsilon}(x), \qquad x \in \mathbb{R}^3 \setminus \{0\}, \vspace{2pt}\\
(x(0),x(T)) =  (\dot x(0),\dot x(T)), 
\vspace{2pt}\\
\, \mathcal{E}_\varepsilon(x,\dot x) \equiv h,
\end{cases}
\end{equation}
where $h \in \mathbb{R}$, the energy $\mathcal{E}_\varepsilon$ is defined as
\begin{equation*}
\mathcal{E}_\varepsilon(x,v) = G(\vert \varphi(v) \vert) - V(\vert x \vert) - U_\varepsilon(x),
\end{equation*}
cf.~\eqref{eq-defenergy}, and $T > 0$ is \textit{unprescribed}. Note that a solution to \eqref{eq-main-energia} is thus a pair $(x,T)$, where $x$ is a periodic solution of the differential equation, with $\mathcal{E}_\varepsilon(x,\dot x ) \equiv h$, and $T$ is a period (not necessarily the minimal one) for $x$.

The plan of the section is the following. In Section~\ref{section-4.1}, we first provide two abstract bifurcation theorems, giving the existence of bifurcating solutions to \eqref{eq-main-periodo} and \eqref{eq-main-energia}, respectively, when the existence of a non-degenerate periodic manifold $\mathcal{M}$ of the unperturbed problem is assumed a priori. 
Then, in Section~\ref{section-4.2}, we finally give our main results, proving bifurcation from the periodic manifold $\mathcal{M}_3$, see \eqref{defM3}, provided that a non-degeneracy condition is assumed for the unperturbed planar problem.

\subsection{Abstract bifurcation theorems}\label{section-4.1}

In the following results, we assume the existence of a compact manifold $\mathcal{M}$ of non-constant non-rectilinear periodic solutions of the unperturbed problem
\begin{equation}\label{eq-unperturbed}
\dfrac{\mathrm{d}}{\mathrm{d}t} \bigl( \varphi(\dot{x}) \bigr) = V'(|x|) \dfrac{x}{|x|}, \qquad x \in \mathbb{R}^3 \setminus\{0\},
\end{equation}
which will be required to have either fixed period (in the case of Theorem~\ref{th-abstract1}) or fixed energy (in the case of Theorem~\ref{th-abstract2}). From now on, with a slight abuse of notation, we systematically identify $\mathcal{M}$ with the manifold $\mathfrak{M}$ of periodic solutions $(x,p) = (x, \varphi(\dot x))$ of the associated Hamiltonian system in $\mathbb{R}^{6}$, see \eqref{eq-HS0}; in particular, we agree to say that $\mathcal{M}$ is non-degenerate in the sense of Proposition~\ref{prop-nondegper} or Proposition~\ref{prop-nondegene} if $\mathfrak{M}$ is. 

Here are the statement of our results, dealing respectively with the fixed-period problem \eqref{eq-main-periodo} and with the fixed-energy problem \eqref{eq-main-energia}.

\begin{theorem}\label{th-abstract1}
Let $\mathcal{M}$ be a compact manifold of non-constant non-rectilinear $T$-periodic solutions for the unperturbed problem \eqref{eq-unperturbed}. Moreover, assume that $\mathcal{M}$ is non-degenerate in the sense of Proposition~\ref{prop-nondegper}.
Then, for every $\sigma > 0$, there exists $\varepsilon^* > 0$ such that for every $\varepsilon \in (-\varepsilon^*,\varepsilon^*)$ there are 
$m \coloneqq \operatorname{cat}(\mathcal{M})$ solutions $x_i$ of \eqref{eq-main-periodo} and $x^*_i \in \mathcal{M}$ ($i=1,\ldots,m$) such that
\begin{equation}\label{cond-th-abstract1}
\vert x_i(t) - x^*_i(t) \vert < \sigma, \quad  \text{for every $t \in [0,T]$,}
\end{equation}
for $i=1,\ldots,m$.
\end{theorem}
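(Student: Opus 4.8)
The plan is to recast the $T$-periodic problem \eqref{eq-main-periodo} in variational form and to apply the abstract bifurcation theorem of Ambrosetti--Coti Zelati--Ekeland (Theorem~\ref{thastratto1}). By the Hamiltonian formulation of Section~\ref{section-2.1}, the $T$-periodic solutions of \eqref{eq-main-periodo} correspond to the $T$-periodic solutions of $Jz' = \nabla_z\mathcal{H}_\varepsilon(t,z)$, with $\mathcal{H}_\varepsilon$ as in \eqref{def-Ham}, and hence to the critical points of the action functional $\mathcal{A}_\varepsilon$ of the form \eqref{def-actionA}. The obstruction to applying the theory of Section~\ref{section-3.1} directly is that $\mathcal{H}_\varepsilon$ is defined only on $\{x\neq 0,\ p\neq A_\varepsilon(t,x)\}$ and lacks the required global polynomial growth, whereas $H^{1/2}_T$-functions are not bounded.

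First I would perform a cut-off. Since $\mathcal{M}$ is compact and made up of non-rectilinear solutions of \eqref{eq-unperturbed} avoiding the origin (non-rectilinearity forces $\dot x^*$ never to vanish, so that $p=\varphi(\dot x^*)\neq 0$), the set $\mathfrak{M}=\Pi(\mathcal{M})\subset\mathbb{R}^6$ is compact, bounded away from $\{x=0\}$ and, for $|\varepsilon|$ small, from $\{p=A_\varepsilon(t,x)\}$. I would fix a bounded open neighborhood $\mathcal{N}$ of $\mathfrak{M}$ whose closure lies in the domain of $\mathcal{H}_\varepsilon$, and replace $\mathcal{H}_\varepsilon$ by a function $\widetilde{\mathcal{H}}_\varepsilon$, of class $\mathcal{C}^2$ in $z$, $T$-periodic in $t$, depending smoothly on $\varepsilon$, coinciding with $\mathcal{H}_\varepsilon$ on $\mathcal{N}$, and having (together with its first two $z$-derivatives) at most polynomial growth in $z$. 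By Section~\ref{section-3.1}, the associated functional $\widetilde{\mathcal{A}}_\varepsilon$ is then well defined and of class $\mathcal{C}^2$ on the whole of $H^{1/2}_T$, with $\mathrm{d}^2\widetilde{\mathcal{A}}_\varepsilon$ Fredholm of index zero.

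Next I would verify the hypotheses of Theorem~\ref{thastratto1} at $\varepsilon=0$. Since $\widetilde{\mathcal{H}}_0=\mathcal{H}_0$ on $\mathcal{N}\supset\mathfrak{M}$, the solution curves in $\mathfrak{M}$ form a compact critical manifold of $\widetilde{\mathcal{A}}_0$, homeomorphic to $\mathcal{M}$ and hence of category $m$. The assumed non-degeneracy of $\mathcal{M}$ is precisely condition $(iii)$ of Proposition~\ref{prop-nondegper}, namely $\dim\ker\mathrm{d}^2\mathcal{A}_0(z)=\dim\mathcal{M}$ for every $z\in\mathfrak{M}$; as $\widetilde{\mathcal{A}}_0$ agrees with $\mathcal{A}_0$ near $\mathfrak{M}$, the kernel of $\mathrm{d}^2\widetilde{\mathcal{A}}_0(z)$ reduces to the tangent space $T_z\mathfrak{M}$, so $\mathfrak{M}$ is a non-degenerate critical manifold. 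The abstract theorem then provides, for every sufficiently small $|\varepsilon|$, at least $m=\operatorname{cat}(\mathcal{M})$ critical points $z_{i,\varepsilon}=(x_{i,\varepsilon},p_{i,\varepsilon})$ of $\widetilde{\mathcal{A}}_\varepsilon$ lying in an $H^{1/2}_T$-neighborhood of $\mathfrak{M}$ that shrinks to $\mathfrak{M}$ as $\varepsilon\to 0$.

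The hard part will be the final step: transferring this $H^{1/2}$-closeness into the uniform estimate \eqref{cond-th-abstract1} and checking that the cut-off is harmless. Each $z_{i,\varepsilon}$ is a classical $T$-periodic solution of $Jz'=\nabla_z\widetilde{\mathcal{H}}_\varepsilon(t,z)$, and I would argue by contradiction: if there were $\varepsilon_n\to 0$ with critical points whose trajectories leave a prescribed smaller neighborhood of $\mathfrak{M}$, the $H^{1/2}$-convergence to $\mathfrak{M}$, together with uniform action bounds and a bootstrap through the differential equation (via Ascoli--Arzel\`a), would let me extract a subsequence converging in $\mathcal{C}^1$ to a $T$-periodic solution of \eqref{eq-unperturbed} contained in $\overline{\mathcal{N}}$, which must then belong to $\mathfrak{M}$, contradicting the exit. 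Hence, for $|\varepsilon|$ small, the trajectories stay inside $\mathcal{N}$, where $\widetilde{\mathcal{H}}_\varepsilon=\mathcal{H}_\varepsilon$, so each $z_{i,\varepsilon}$ solves the original system and the $x_{i,\varepsilon}$ solve \eqref{eq-main-periodo}; choosing $\mathcal{N}$ small enough at the outset yields $\mathcal{C}^0$-closeness to suitable $x^*_i\in\mathcal{M}$ within $\sigma$, which is \eqref{cond-th-abstract1}. The delicate point throughout is exactly this upgrade from the weak $H^{1/2}$-topology, natural for the variational scheme but blind to the sup-norm, to the $\mathcal{C}^1$-control needed to keep the orbits inside the cut-off region.
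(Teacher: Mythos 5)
Your proposal follows the same skeleton as the paper's proof: pass to the Hamiltonian formulation, cut off the Hamiltonian near the compact set $\mathfrak{M}$ of phase-space solutions so that the $H^{1/2}_T$ action functional of Section~\ref{section-3.1} is well defined and of class $\mathcal{C}^2$, verify hypotheses $(i)$--$(iii)$ of Theorem~\ref{thastratto1} exactly as you do (with $(iii)$ coming from Proposition~\ref{prop-nondegper}, since the modified and original Hamiltonians agree along $\mathcal{M}$), and then show that the critical points produced for small $\varepsilon$ solve the original, uncut, system. The difference is in this last step: where you propose a soft contradiction/compactness argument (Ascoli--Arzel\`a plus a bootstrap through the equation), the paper proves a quantitative statement, Lemma~\ref{lemm-est}: if a $T$-periodic solution $z=(x,p)$ of the modified system is $\eta$-close in $H^{1/2}$ to some $z_*=(x_*,p_*)\in\mathcal{M}$, then $|x(t)-x_*(t)|+|p(t)-p_*(t)|<\sigma$ for every $t$, \emph{for that same} $z_*$. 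Its proof combines the embedding $H^1\hookrightarrow L^\infty$ with the equation itself, which is why the paper's cut-off is designed to make $\nabla\widehat{G}$ globally Lipschitz and the remainder $\widehat{\mathcal{R}}_\varepsilon$ uniformly small --- slightly stronger than your polynomial-growth requirement, though your version also suffices for the Fredholm property and for the $\mathcal{C}^1$-precompactness you invoke. One caution on your compactness step: the limit solution must be identified as an element of $\mathfrak{M}$ through the vanishing $H^{1/2}$-distance (via shrinking neighborhoods $U$), not merely through containment in $\overline{\mathcal{N}}$, since a priori $\mathcal{M}$ need not exhaust the periodic solutions lying in $\overline{\mathcal{N}}$.

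One inference in your final paragraph needs repair. From your contradiction argument you conclude only that, for small $|\varepsilon|$, the trajectories of the critical points stay inside $\mathcal{N}$, and you then claim that ``choosing $\mathcal{N}$ small enough at the outset'' yields \eqref{cond-th-abstract1}. This does not follow: pointwise containment in a small tube around the \emph{union} of the orbits of $\mathcal{M}$ (a set which, for instance when $\mathcal{M}=\mathcal{M}_3$, fills an entire spherical shell in configuration space) does not produce a single $x^*_i\in\mathcal{M}$ with $|x_i(t)-x^*_i(t)|<\sigma$ for every $t$: condition \eqref{cond-th-abstract1} requires closeness to one unperturbed solution with \emph{matched time parametrization}. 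The fix is available within your own scheme: rerun the same compactness argument taking as ``failure'' the negation of \eqref{cond-th-abstract1}, i.e.\ that the critical point is not $\sigma$-close in $\mathcal{C}^0$, with matched parametrization, to any element of $\mathcal{M}$; the $\mathcal{C}^1$-convergence along a subsequence to some $z^\infty\in\mathfrak{M}$ contradicts this failure directly, and simultaneously rules out exit from $\mathcal{N}$. Alternatively, a quantitative estimate in the spirit of the paper's Lemma~\ref{lemm-est} gives both conclusions at once. With this adjustment your proof is complete.
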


\begin{theorem}\label{th-abstract2}
Let $\mathcal{M}$ be a compact manifold of non-constant non-rectilinear periodic solutions for the unperturbed problem \eqref{eq-unperturbed} satisfying, for some $h \in \mathbb{R}$,
\begin{equation*}
\mathcal{E}_0(x(t),\dot x(t)) \equiv h, \quad \text{for every $x \in \mathcal{M}$.}
\end{equation*}
Moreover, assume that a smooth choice $\mathcal{M} \ni x \mapsto T(x)$ can be made for a period of $x$ and that 
$\mathcal{M}$ is non-degenerate in the sense of Proposition~\ref{prop-nondegene}.
Then, for every $\sigma > 0$, there exists $\varepsilon^* > 0$ such that for every $\varepsilon \in (-\varepsilon^*,\varepsilon^*)$ there are a solution $(x,T)$ of \eqref{eq-main-energia} and $x^* \in \mathcal{M}$ such that $\vert T - T(x^*) \vert < \sigma$
and
\begin{equation}\label{cond-th-abstract2}
\vert x(t) - x^*(t) \vert < \sigma, \quad  \text{for every $t \in [0,T]$.}
\end{equation}
\end{theorem}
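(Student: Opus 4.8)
The plan is to recast \eqref{eq-main-energia} variationally and apply the abstract bifurcation theorem of \cite{AmCoEk-87} (Theorem~\ref{thastratto1}) to the free-period Hamiltonian action functional $\mathcal{B}_\varepsilon$ of Section~\ref{section-3.1}, built from the autonomous Hamiltonian $\mathcal{H}_\varepsilon$ in \eqref{def-Ham} with $U_\varepsilon, A_\varepsilon$ time-independent. First I would identify the unperturbed critical manifold: to each $x \in \mathcal{M}$, setting $z(t) = (x(t),\varphi(\dot x(t)))$ and using the chosen period $T(x)$, I associate the pair $(\zeta_x, T(x)) \in H^{1/2}_1 \times (0,+\infty)$ with $\zeta_x(s) = z(T(x)s)$. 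The computations recorded in Section~\ref{section-3.1} show that each such pair is a critical point of $\mathcal{B}_0$, and since $x \mapsto (\zeta_x, T(x))$ is a smooth injection of the compact manifold $\mathcal{M}$ (distinct $z \in \mathfrak{M}$ have distinct initial values $\zeta_x(0) = z$), the set $\mathcal{N} = \{(\zeta_x, T(x)) \colon x \in \mathcal{M}\}$ is a compact critical manifold of $\mathcal{B}_0$ with $\dim \mathcal{N} = \dim \mathcal{M}$.

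Since $\mathcal{H}_\varepsilon$ is defined only for $x \neq 0$ and $p \neq A_\varepsilon(x)$, while elements of $H^{1/2}$ need not be bounded, the functional is not a priori well defined on an $H^{1/2}$-neighborhood of $\mathcal{N}$. I would therefore replace $\mathcal{H}_\varepsilon$ by a globally defined $\mathcal{C}^2$ Hamiltonian $\widetilde{\mathcal{H}}_\varepsilon$, of at most polynomial growth, coinciding with $\mathcal{H}_\varepsilon$ on a fixed neighborhood of the compact set traced out by the solutions in $\mathcal{M}$ (which is bounded away from the singular set $\{x=0\}$), following the cut-off performed in \cite{BoFePa-25} for the fixed-period problem. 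The resulting $\mathcal{B}_\varepsilon$ is then of class $\mathcal{C}^2$ on all of $H^{1/2}_1 \times (0,+\infty)$; moreover, by the smooth dependence on $\varepsilon$ assumed in \ref{H3} together with $U_0 \equiv A_0 \equiv 0$, it converges to $\mathcal{B}_0$ in the $\mathcal{C}^2$-sense needed by Theorem~\ref{thastratto1}, and the gradient of the Hamiltonian term defines a compact operator, so that $\mathrm{d}^2 \mathcal{B}_\varepsilon$ is, as in Section~\ref{section-3.1}, a compact perturbation of a fixed Fredholm operator, providing the compactness for the Lyapunov--Schmidt reduction underlying the abstract theorem.

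The core of the argument is the non-degeneracy of $\mathcal{N}$ as a critical manifold of $\mathcal{B}_0$, namely $\ker(\mathrm{d}^2 \mathcal{B}_0(\zeta_x, T(x))) = T_{(\zeta_x, T(x))}\mathcal{N}$ for every $x \in \mathcal{M}$. The inclusion $\supseteq$ is automatic, upon differentiating the identity $\mathrm{d}\mathcal{B}_0 \equiv 0$ along $\mathcal{N}$; the reverse inclusion amounts to the equality $\dim \ker(\mathrm{d}^2 \mathcal{B}_0) = \dim \mathcal{M}$, which is exactly condition $(iii)$ of Proposition~\ref{prop-nondegene} and therefore holds under the standing non-degeneracy hypothesis on $\mathcal{M}$. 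Combined with the Fredholm-of-index-zero property of $\mathrm{d}^2 \mathcal{B}_0$ established in Section~\ref{section-3.1}, this lets me invoke Theorem~\ref{thastratto1}: for $\varepsilon$ small there is a critical point $(\zeta_\varepsilon, T_\varepsilon)$ of $\mathcal{B}_\varepsilon$ in any prescribed $H^{1/2}_1 \times \mathbb{R}$-neighborhood of $\mathcal{N}$.

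Finally I would translate back. As shown in Section~\ref{section-3.1}, the critical point $(\zeta_\varepsilon, T_\varepsilon)$ yields a $T_\varepsilon$-periodic solution $z_\varepsilon(t) = \zeta_\varepsilon(t/T_\varepsilon)$ of the Hamiltonian system on the energy level $\mathcal{H}_\varepsilon = h$; its position component $x_\varepsilon$ then solves the differential equation in \eqref{eq-main-energia} with $\mathcal{E}_\varepsilon(x_\varepsilon, \dot x_\varepsilon) \equiv h$, so that $(x_\varepsilon, T_\varepsilon)$ is a solution of \eqref{eq-main-energia}. The $H^{1/2}$-proximity of $(\zeta_\varepsilon, T_\varepsilon)$ to $\mathcal{N}$, upgraded to $C^0$ once $x_\varepsilon$ is known to satisfy the equation, gives $\vert T_\varepsilon - T(x^*) \vert < \sigma$ and the uniform bound \eqref{cond-th-abstract2} for a suitable $x^* \in \mathcal{M}$. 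I expect the main obstacle to be precisely this last passage: because $H^{1/2}$-smallness does not control the $L^\infty$-norm, one must show that the bifurcating critical point remains in the region where $\widetilde{\mathcal{H}}_\varepsilon \equiv \mathcal{H}_\varepsilon$ --- so that the cut-off is inactive and $x_\varepsilon$ avoids $\{x = 0\}$ --- thereby recovering the $L^\infty$-bounds from an a priori only $H^{1/2}$-small perturbation, while simultaneously keeping the free-period variable $T_\varepsilon$ under control.
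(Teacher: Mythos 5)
Your overall strategy coincides with the paper's: cut off the Hamiltonian, work with the free-period functional $\mathcal{B}_\varepsilon$ on $H^{1/2}_1 \times (0,+\infty)$, take as critical manifold $\mathfrak{N} = \bigl\{(z(T(z)s),T(z)) \colon z\in\mathcal{M} \bigr\}$, verify the hypotheses of Theorem~\ref{thastratto1} via condition $(iii)$ of Proposition~\ref{prop-nondegene} and the Fredholm computation of Section~\ref{section-3.1}, and then convert critical points back into fixed-energy solutions. However, there is a genuine gap at exactly the step you yourself flag as ``the main obstacle'': you never show that the bifurcating critical point $(\zeta_\varepsilon,T_\varepsilon)$ of the modified functional stays in the region where the cut-off is inactive, so you cannot conclude that $x_\varepsilon$ solves \eqref{eq-main-energia}, nor obtain \eqref{cond-th-abstract2}. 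Your remark that the $H^{1/2}$-proximity is ``upgraded to $C^0$ once $x_\varepsilon$ is known to satisfy the equation'' is circular as stated: knowing that $x_\varepsilon$ satisfies the \emph{original} equation is precisely what requires the $C^0$ bound in the first place.

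The paper closes this gap with Lemma~\ref{lemm-est}, and the key idea your write-up misses is that the bootstrap runs through the \emph{modified} system, not the original one. A critical point of $\mathcal{B}_\varepsilon$ yields a $T$-periodic solution $z=(x,p)$ of \eqref{eq-system-mod}, whose right-hand side is globally bounded and Lipschitz, because $\widehat{\mathcal{H}}_\varepsilon$ is cut off in \emph{both} $x$ and $p$ and satisfies \eqref{eq-delta} for small $\varepsilon$. Hence, from $H^{1/2}_1$-closeness to $\mathcal{M}$ (after rescaling, keeping track of the period discrepancy $|T-T(z_*)|$) one first gets $L^2(0,T)$-closeness of $z$ to $z_*$; plugging this into the equations for $\dot{x}-\dot{x}_*$ and $\dot{p}-\dot{p}_*$ and using the global Lipschitz continuity of $\nabla\widehat{G}$ gives $H^1(0,T)$-closeness up to $O(\delta)$; the embedding $H^1 \hookrightarrow L^\infty$ then delivers the uniform estimate \eqref{eq-thesis-lemma}. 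Finally, choosing $\sigma < \min\{r_1,\rho_1\}/2$ forces $r_1/2 < |x(t)| < 2r_2$ and $\rho_1/2 < |p(t)| < 2\rho_2$, so the cut-off is inactive and $z$ solves the original system \eqref{eq-system} at energy $h$. A related minor imprecision: your cut-off is described only around the singular set $\{x=0\}$, but it must also be performed in the momentum variable (as the paper does with $\chi_2(|p|)$), both because $G(|p|)$ may fail to be $\mathcal{C}^2$ at $p=A_\varepsilon$ and, more importantly, because it is the two-sided cut-off that makes the modified vector field globally Lipschitz and lets the bootstrap above go through.
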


Conditions \eqref{cond-th-abstract1} and \eqref{cond-th-abstract2} ensure that the solutions found remain, for $\varepsilon \to 0$, arbitrarily close to the periodic manifold $\mathcal{M}$: in this sense, we agree to say that they bifurcate from $\mathcal{M}$
(note that in the fixed-energy case also the periods are close, due to $\vert T - T(x^*) \vert < \sigma$). We point out that, as it will be clear from the proofs, it holds that $\vert \varphi(\dot x(t)) - \varphi(\dot x^*(t)) \vert < \sigma$ for every $t \in [0,T]$, as well.
A couple of further remarks about the statements are now in order.

\begin{remark}\label{rem-cat-multiplicity}
While Theorem~\ref{th-abstract1} provides $m = \operatorname{cat}(\mathcal{M})$ solutions of the fixed-period problem \eqref{eq-main-periodo}, just one solution is obtained for the fixed-energy problem \eqref{eq-main-energia} using Theorem~\ref{th-abstract2}. The reason for this difference lies in the fact that, while the non-autonomous perturbed problem \eqref{eq-main-periodo}
breaks in general all the invariances (rotations, time-translations and time-inversions) of the unperturbed problem, the autonomous perturbed problem \eqref{eq-main-energia} is still invariant by time-translation (and by time-inversion if $B_\varepsilon \equiv 0$): thus, in principle, multiple bifurcating solutions could be in the same equivalence class for such invariances.
A way to overcome this difficulty could be that of working in a suitable quotient space and using an equivariant version of the Lusternik--Schnirelmann category, as done, for instance, in \cite{AmBe-92} (see, in particular, Remark~4 therein). However, the computation of the equivariant category is typically very difficult and, in particular, we do not know a way to compute it in the setting treated in Section~\ref{section-4.2}. For this reason, we have preferred to state Theorem~\ref{th-abstract2} in a simpler form, avoiding at all any multiplicity claim.
\hfill$\lhd$
\end{remark}

\begin{remark}\label{rem-d-dim}
Even if, for the sake of simplicity in the exposition, we have stated Theorem~\ref{th-abstract1} and Theorem~\ref{th-abstract2} for the three-dimensional equation \eqref{eq-main-bis}, the very same results hold for the $d$-dimensional system, with $d\geq2$,
\begin{equation}\label{eq-rem-d}
\dfrac{\mathrm{d}}{\mathrm{d}t} \bigl( \varphi(\dot{x}) \bigr) 
= V'(|x|) \dfrac{x}{|x|} + E_{\varepsilon}(t,x)+ C_{\varepsilon}(t,x) \dot{x}, \qquad x \in \mathbb{R}^d \setminus \{0\}, 
\end{equation}
where $U_\varepsilon\colon \mathbb{R} \times (\mathbb{R}^d \setminus \{0\}) \to \mathbb{R}$, $A_\varepsilon \colon \mathbb{R} \times (\mathbb{R}^d \setminus \{0\}) \to \mathbb{R}^d$, and
\begin{equation*}
E_{\varepsilon}(t,x) = \nabla_{x}U_{\varepsilon}(t,x) - \dfrac{\partial}{\partial t} A_{\varepsilon}(t,x),
\qquad
C_{\varepsilon}(t,x) = (\mathrm{D}_{x}A_{\varepsilon}(t,x))^{\top}- \mathrm{D}_{x}A_{\varepsilon}(t,x).
\end{equation*}
Indeed, it will be clear that the proofs of Theorem~\ref{th-abstract1} and Theorem~\ref{th-abstract2} work for the Hamiltonian \eqref{def-Ham} in any dimension $d\geq2$ and such a Hamiltonian provides the Lagrangian system \eqref{eq-rem-d}.
For $d=3$, due to \eqref{eq-vectorcalculus}, this system coincides with \eqref{eq-main-bis}.
Notice that for $d=2$ we have instead
\begin{equation*}
C_{\varepsilon}(t,x) = (\partial_{x_{2}} A^{1}_{\varepsilon}(t,x) - \partial_{x_{1}} A^{2}_{\varepsilon}(t,x) ) J,
\end{equation*}
with $J$ as in \eqref{eq-standsym}. In particular, taking $A_{\varepsilon}(t,x) = \varepsilon (x_{2},0)$, we obtain an equation of the form
\begin{equation*}
\dfrac{\mathrm{d}}{\mathrm{d}t} \bigl( \varphi(\dot{x}) \bigr) 
= V'(|x|) \dfrac{x}{|x|} + \nabla_{x}U_{\varepsilon}(t,x) + \varepsilon J \dot{x}, \qquad x \in \mathbb{R}^2 \setminus \{0\}.
\end{equation*}
This kind of structure is typical of restricted problems of celestial mechanics when passing to a rotating system of reference (cf.~\cite{So-14}).
\hfill$\lhd$
\end{remark}

The proofs of Theorem~\ref{th-abstract1} and Theorem~\ref{th-abstract2} rely on a variational approach, 
based on the use, respectively, of the Hamiltonian action functional and of the free-period Hamiltonian action functional introduced in Section~\ref{section-3.1}
(the reason to adopt a Hamiltonian formulation instead of the Lagrangian one, see Remark~\ref{rem-Lagrangian-non-degeneracy}, comes from regularity issues, see \cite[Remark~3.3]{BoFePa-25}).
Precisely, we are going to apply an abstract perturbation theorem (see \cite[Theorem~2.1]{AmCoEk-87} and the references therein), which we recall here for the reader's convenience, in the version stated in \cite[Theorem~10.8]{MaWi-89}.

\begin{theorem}\label{thastratto1}
Let $H$ be a real Hilbert space, $\Omega \subset H$ be an open set, and $\mathcal{F}_\varepsilon \colon \Omega \to \mathbb{R}$
be a family of twice continuously differentiable functions depending smoothly on $\varepsilon$.
Moreover, let $\mathfrak{N} \subset \Omega$ be a compact manifold (without boundary) such that:
\begin{itemize}
\item[$(i)$] $\mathrm{d}\mathcal{F}_0(y) = 0$, for every $y \in \mathfrak{N}$;
\item[$(ii)$] $\mathrm{d}^2 \mathcal{F}_0(y)$ is a Fredholm operator of index zero, for every $y \in \mathfrak{N}$;
\item[$(iii)$] $T_y \mathfrak{N} = \ker (\mathrm{d}^2 \mathcal{F}_0(y))$, for every $y \in \mathfrak{N}$.
\end{itemize}
Then, for every neighborhood $U$ of $\mathfrak{N}$, 
there exists $\varepsilon^*> 0$ such that, 
if $\varepsilon\in(-\varepsilon^*,\varepsilon^*)$, the functional
$\mathcal{F}_\varepsilon$ has at least $\operatorname{cat}(\mathfrak{N})$ critical points in $U$.
\end{theorem}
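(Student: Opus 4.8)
The plan is to combine a Lyapunov--Schmidt reduction, carried out uniformly along the critical manifold $\mathfrak{N}$, with Lusternik--Schnirelmann theory on the resulting finite-dimensional reduced space. First I would fix, for each $y \in \mathfrak{N}$, the orthogonal splitting $H = T_y\mathfrak{N} \oplus N_y$, where $N_y = (T_y\mathfrak{N})^\perp$, with associated orthogonal projections $P_y$ and $Q_y = I - P_y$. Writing $F(y)$ for the self-adjoint realization of $\mathrm{d}^2\mathcal{F}_0(y)$, hypothesis $(ii)$ says $F(y)$ is Fredholm of index zero and $(iii)$ says $\ker F(y) = T_y\mathfrak{N}$; by self-adjointness $\operatorname{Im} F(y) = N_y$ and the restriction $F(y)|_{N_y}\colon N_y \to N_y$ is a bounded isomorphism. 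Because $\mathfrak{N}$ is compact and $\mathcal{C}^2$, the projections and the inverses $(F(y)|_{N_y})^{-1}$ depend continuously on $y$ with uniform bounds, and $\mathfrak{N}$ admits a tubular neighborhood, so points close to $\mathfrak{N}$ are uniquely written as $z = y + w$ with $y \in \mathfrak{N}$ and $w \in N_y$ small.

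Next I would solve the auxiliary (normal) equation $Q_y\nabla\mathcal{F}_\varepsilon(y+w) = 0$ for $w$ as a function of $(y,\varepsilon)$. At $(y,0,0)$ the left-hand side vanishes by $(i)$, and its derivative in $w$ is $Q_y F(y)|_{N_y} = F(y)|_{N_y}$, which is invertible; working in local trivializations of the normal bundle and patching by uniqueness, the implicit function theorem — applied with constants independent of $y$ thanks to the compactness of $\mathfrak{N}$ — yields a unique $\mathcal{C}^1$ map $(y,\varepsilon) \mapsto w(y,\varepsilon) \in N_y$, small, with $w(y,0) \equiv 0$ (and hence $D_y w(y,0) \equiv 0$). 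Setting $z(y,\varepsilon) = y + w(y,\varepsilon)$, the images $Z_\varepsilon = \{z(y,\varepsilon) \colon y \in \mathfrak{N}\}$ form compact manifolds, each diffeomorphic to $\mathfrak{N}$ and converging to $\mathfrak{N}$ as $\varepsilon \to 0$, on which $\nabla\mathcal{F}_\varepsilon$ everywhere lies in $T_y\mathfrak{N}$.

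The core of the argument is the \emph{natural constraint property}: a critical point of the restriction $\mathcal{F}_\varepsilon|_{Z_\varepsilon}$ is in fact a free critical point of $\mathcal{F}_\varepsilon$. Indeed, at $z = z(y,\varepsilon)$ the gradient $g := \nabla\mathcal{F}_\varepsilon(z)$ lies in $T_y\mathfrak{N}$ by the auxiliary equation, while $T_z Z_\varepsilon = \{\tau + D_y w(y,\varepsilon)[\tau] \colon \tau \in T_y\mathfrak{N}\}$. Criticality on $Z_\varepsilon$ means $\langle g, \tau\rangle + \langle g, D_y w(y,\varepsilon)[\tau]\rangle = 0$ for all $\tau \in T_y\mathfrak{N}$; since $D_y w(y,\varepsilon)$ is uniformly small for $\varepsilon$ small, the bilinear form $(g,\tau) \mapsto \langle g,\tau\rangle + \langle g, D_y w(y,\varepsilon)[\tau]\rangle$ on $T_y\mathfrak{N}$ is a small perturbation of the inner product, hence non-degenerate, forcing $g = 0$. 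Thus critical points of $\mathcal{F}_\varepsilon$ near $\mathfrak{N}$ correspond exactly to critical points of the $\mathcal{C}^1$ function $\mathcal{F}_\varepsilon|_{Z_\varepsilon}$ on the compact boundaryless manifold $Z_\varepsilon \cong \mathfrak{N}$.

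To conclude, I would invoke Lusternik--Schnirelmann theory: a $\mathcal{C}^1$ function on a compact manifold $X$ without boundary has at least $\operatorname{cat}(X)$ critical points (the Palais--Smale condition being automatic on a compact manifold), so $\mathcal{F}_\varepsilon|_{Z_\varepsilon}$ has at least $\operatorname{cat}(Z_\varepsilon) = \operatorname{cat}(\mathfrak{N})$ of them. Finally, given the neighborhood $U$, the uniform convergence $w(\cdot,\varepsilon) \to 0$ lets me fix $\varepsilon^* > 0$ so small that $Z_\varepsilon \subset U$ for $|\varepsilon| < \varepsilon^*$, placing all these critical points inside $U$. I expect the main obstacle to be making the reduction genuinely uniform in $y$: one must control the $y$-dependence of the splitting, of the projections, and of $(F(y)|_{N_y})^{-1}$, and verify that the implicit function theorem applies with constants independent of $y \in \mathfrak{N}$ — this is precisely where the compactness of $\mathfrak{N}$, together with the continuity of $y \mapsto \mathrm{d}^2\mathcal{F}_0(y)$ as a family of Fredholm operators, is indispensable.
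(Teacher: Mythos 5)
The paper offers no proof of this statement: it is an abstract perturbation theorem quoted from the literature (\cite[Theorem~2.1]{AmCoEk-87}, in the version of \cite[Theorem~10.8]{MaWi-89}), so there is nothing in the paper itself to compare against. Your argument --- a Lyapunov--Schmidt reduction made uniform along the critical manifold by compactness, the natural-constraint property forcing critical points of $\mathcal{F}_\varepsilon|_{Z_\varepsilon}$ to be free critical points, and Lusternik--Schnirelmann theory on the compact reduced manifold $Z_\varepsilon \cong \mathfrak{N}$ --- is correct and is essentially the standard proof given in those references.
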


Let us consider equation \eqref{eq-main-bis}, which, as discussed in Section~\ref{section-2.1}, can be written in an equivalent way as the Hamiltonian system associated with the Hamiltonian $\mathcal{H}_{\varepsilon}$ defined in \eqref{def-Ham},
which from now on we write as
\begin{equation*}
\mathcal{H}_{\varepsilon}(t,x,p) = G(|p|) - V(|x|) + \mathcal{R}_{\varepsilon}(t,x,p),
\end{equation*}
with
\begin{equation*}
\mathcal{R}_{\varepsilon}(t,x,p) = - U_{\varepsilon}(t,x) + G(|p-A_{\varepsilon}(t,x)|)-G(|p|).
\end{equation*}
Since $\mathcal{H}_{\varepsilon}$ is singular at $x=0$ and exhibits a possible lack of smoothness at $p=A_{\varepsilon}(t,x)$, 
we are not allowed to write down the corresponding action functionals as described in Section~\ref{section-3.1}:
the fact that we can work locally around a compact manifold $\mathfrak{N}$ is, in itself, not enough to remedy the difficulty, since functions in the Hilbert space $H^{1/2}$ are not bounded, in general.

To overcome this issue, the strategy will be that of defining a modified Hamiltonian $\widehat{\mathcal{H}}_{\varepsilon}$ and then checking, with an ad-hoc argument, that periodic solutions of the modified system also solve the original one. 
More precisely, we argue as follows. 
First, we observe that, since the compact manifold $\mathcal{M}$ does not contain rectilinear solutions of \eqref{eq-unperturbed}, then
\begin{equation*}
x(t) \neq 0 \quad \text{ and } \quad \varphi(\dot x(t)) \neq 0, \quad \text{for every $t \in \mathbb{R}$,}
\end{equation*}
for every $x \in \mathcal{M}$
and so, by the compactness of $\mathcal{M}$, there are $r_1,r_2,\rho_1,\rho_2>0$ such that
\begin{equation}\label{def-r-rho}
0 < r_1 \leq |x(t)| \leq r_2, 
\quad
0 < \rho_1 \leq |\varphi(\dot x(t))| \leq \rho_2, \quad \text{for every $t \in \mathbb{R}$.}
\end{equation}
Accordingly, we consider two smooth functions $\chi_1,\chi_2\colon[0,+\infty) \to [0,+\infty)$ such that
\begin{equation*}
\chi_1(s)=
\begin{cases}
\, 1, &\text{if } s\in\bigl[\frac{r_1}{2},2r_2\bigr],
\\
\, 0, &\text{if } s\in \bigl[0,\frac{r_1}{4}\bigr] \cup [4r_2,+\infty),
\end{cases}
\quad
\chi_2(s)=
\begin{cases}
\, 1, &\text{if } s\in\bigl[\frac{\rho_1}{2},2\rho_2\bigr],
\\
\, 0, &\text{if } s\in \bigl[0,\frac{\rho_1}{4}\bigr] \cup [4\rho_2,+\infty).
\end{cases}
\end{equation*}
Finally, for every $(t,x,p)\in\mathbb{R}\times\mathbb{R}^3\times\mathbb{R}^3$, we define
\begin{equation*}
\widehat{\mathcal{H}}_{\varepsilon}(t,x,p) = \widehat{G}(|p|)-\widehat{V}(|x|)+\widehat{\mathcal{R}}_{\varepsilon}(t,x,p),
\end{equation*}
where
\begin{align*}
&\widehat{G}(|p|) = \chi_2(|p|) G(|p|),
\\
&\widehat{V}(|x|) = \chi_1(|x|) V(|x|),
\\
&\widehat{\mathcal{R}}_{\varepsilon}(t,x,p) = \chi_1(|x|)\chi_2(|p|) \mathcal{R}_{\varepsilon}(t,x,p).
\end{align*}
Note that $\widehat{\mathcal{H}}_{\varepsilon}$ is two times differentiable with respect to $(x,p)$ and that both $\widehat{\mathcal{H}}_{\varepsilon}$ and their derivatives up to order two are continuous with respect to $(t,x,p)$ and globally bounded. Moreover, it easy to check that for every $\delta>0$ there exists $\tilde{\varepsilon}(\delta)>0$ such that for every $\varepsilon\in(-\tilde{\varepsilon}(\delta),\tilde{\varepsilon}(\delta))$ it holds that
\begin{equation}\label{eq-delta}
|\widehat{\mathcal{R}}_{\varepsilon}(t,x,p)| + |\mathrm{D}_{(x,p)}\widehat{\mathcal{R}}_{\varepsilon}(t,x,p)| \leq \delta,
\quad \text{for every $(x,p)\in\mathbb{R}^3\times\mathbb{R}^3$.}
\end{equation}

The next key result ensures that, under appropriate conditions, periodic solutions of system \eqref{eq-system}
can be obtained via periodic solutions of the modified Hamiltonian system
\begin{equation}\label{eq-system-mod}
\dot{x} = \nabla_{p} \widehat{\mathcal{H}}_{\varepsilon}(t,x,p),
\qquad
\dot{p} = -\nabla_{x} \widehat{\mathcal{H}}_{\varepsilon}(t,x,p).
\end{equation}
We stress that in the statement below $T$ stands for the (fixed) period of the Hamiltonian $\mathcal{H}_{\varepsilon}$ when we deal with the fixed-period problem; on the other hand $T$ is an unprescribed period when the Hamiltonian $\mathcal{H}_{\varepsilon}$ is autonomous and we deal with the fixed-energy problem.
Accordingly, if $z=(x,p) \in \mathcal{M}$, then we denote by $T(z)=T(x)$ the chosen period of $z$ in the fixed-energy problem (cf.~Theorem~\ref{th-abstract2}), while we agree that $T(z)=T$ in the fixed-period case (and, thus, the term $|T-T(z_*)|$ appearing in \eqref{eq-estimate-lemma} below is equal to zero).

\begin{lemma}\label{lemm-est}
For every $\sigma>0$ there exist $\hat{\varepsilon},\hat{\eta}>0$ such that, if $\varepsilon\in(-\hat{\varepsilon},\hat{\varepsilon})$, $\eta\in(0,\hat{\eta})$ and $z=(x,p)$ is a $T$-periodic solution of \eqref{eq-system-mod} satisfying 
\begin{equation}\label{eq-estimate-lemma}
\|z(Ts)-z_*(T(z_*)s)\|_{H^{1/2}_1}+ |T-T(z_*)|<\eta,
\end{equation}
for some $z_*=(x_*,p_*)\in\mathcal{M}$, then 
\begin{equation}\label{eq-thesis-lemma}
|x(t)-x_*(t)| + |p(t)-p_*(t)| < \sigma,
\end{equation}
for every $t\in[0,T]$. In particular, if $\sigma < \min\{r_1,\rho_1\}/2$, then $z = (x,p)$ is a $T$-periodic solution of \eqref{eq-system} (and, thus, $x$ is a $T$-periodic solution of \eqref{eq-main-bis}).
\end{lemma}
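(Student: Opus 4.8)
The plan is to split the statement into two parts: first the quantitative passage from the $H^{1/2}$-closeness \eqref{eq-estimate-lemma} to the $L^\infty$-closeness \eqref{eq-thesis-lemma}, and then the removal of the cut-offs, showing that a solution of the modified system \eqref{eq-system-mod} whose trajectory stays near $\mathcal{M}$ actually solves \eqref{eq-system}. The first part is the delicate one: since in dimension one $H^{1/2}$ does \emph{not} embed into $L^\infty$, the smallness of $\|\zeta-\zeta_*\|_{H^{1/2}_1}$ alone cannot yield $L^\infty$-smallness. The way out is to exploit an a priori regularity enjoyed by genuine solutions of \eqref{eq-system-mod} but not by general $H^{1/2}$-functions.

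First I would set $\zeta(s)\coloneqq z(Ts)$ and $\zeta_*(s)\coloneqq z_*(T(z_*)s)$ and record a uniform $H^1$-bound on both. Indeed, since $\widehat{\mathcal{H}}_\varepsilon$ has globally bounded first derivatives (uniformly in $t$ and, for $|\varepsilon|$ small, in $\varepsilon$, as noted before the statement), the right-hand side of \eqref{eq-system-mod} is bounded by a constant $C$ independent of the solution; hence $|\dot z|\le C$, and the rescaled functions $\zeta,\zeta_*$ are Lipschitz with a constant controlled by $C$ and by the periods, which are uniformly bounded above and below because $\mathcal{M}$ is compact. Combining the resulting control of $\|\zeta'\|_{L^2}$ with the $L^2$-bound $\|\zeta-\zeta_*\|_{L^2}\le\|\zeta-\zeta_*\|_{H^{1/2}_1}<\eta$ and the boundedness of $\mathcal{M}$, I obtain a uniform bound $\|\zeta\|_{H^1_1}+\|\zeta_*\|_{H^1_1}\le M$.

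Then I would upgrade the $H^{1/2}$-control to an $H^{3/4}$-control via the interpolation inequality $\|w\|_{H^{3/4}_1}\le\|w\|_{H^{1/2}_1}^{1/2}\|w\|_{H^1_1}^{1/2}$ (immediate from Cauchy--Schwarz on the Fourier side), applied to $w\coloneqq\zeta-\zeta_*$: this gives $\|w\|_{H^{3/4}_1}\le M^{1/2}\eta^{1/2}$, and since the embedding $H^{3/4}_1\hookrightarrow L^\infty$ holds in dimension one (the exponent $3/4$ lying above the critical threshold $1/2$), one gets $\|w\|_{L^\infty}\le C'\eta^{1/2}$. It remains to compare the two solutions at equal times: writing $z(t)-z_*(t)=\bigl[\zeta(t/T)-\zeta_*(t/T)\bigr]+\bigl[\zeta_*(t/T)-\zeta_*(t/T(z_*))\bigr]$, the first bracket is estimated by $\|w\|_{L^\infty}$, while the second is bounded, using the Lipschitz continuity of $\zeta_*$, the lower bound on the periods and $|T-T(z_*)|<\eta$, by a quantity of order $\eta$. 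Choosing $\hat\eta$ so small that $C'\eta^{1/2}+O(\eta)<\sigma$ yields \eqref{eq-thesis-lemma}, uniformly in $z_*\in\mathcal{M}$.

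Finally, for the second part I would assume $\sigma<\min\{r_1,\rho_1\}/2$ and combine \eqref{eq-thesis-lemma} with the two-sided bounds \eqref{def-r-rho} for $z_*=(x_*,p_*)$ to deduce $r_1/2<|x(t)|<2r_2$ and $\rho_1/2<|p(t)|<2\rho_2$ for every $t$; that is, $|x(t)|$ and $|p(t)|$ lie in the interior of the plateaus on which $\chi_1\equiv1$ and $\chi_2\equiv1$. Consequently, along the trajectory both cut-offs equal one and their derivatives vanish, so $\nabla_{(x,p)}\widehat{\mathcal{H}}_\varepsilon(t,z(t))=\nabla_{(x,p)}\mathcal{H}_\varepsilon(t,z(t))$, whence $z$ solves \eqref{eq-system} and the associated $x$ solves \eqref{eq-main-bis} by the discussion in Section~\ref{section-2.1}. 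The single genuine obstacle is the borderline failure of $H^{1/2}\hookrightarrow L^\infty$, which is circumvented precisely by the a priori $H^1$-regularity of the solutions of the (globally bounded) modified system.
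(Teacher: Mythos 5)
Your proof is correct, and its second half (removal of the cut-offs via the plateaus of $\chi_1,\chi_2$, using \eqref{def-r-rho}) coincides with the paper's argument. The first half, however, handles the key analytic step by a genuinely different mechanism. The paper does not interpolate: it takes the \emph{difference of the two equations}. Since the gradients of $\widehat{G}$ and $\widehat{V}$ are globally Lipschitz and $|\mathrm{D}_{(x,p)}\widehat{\mathcal{R}}_\varepsilon|\le\delta$ for $|\varepsilon|$ small (cf.~\eqref{eq-delta}), one gets
\begin{equation*}
\|\dot x-\dot x_*\|_{L^2(0,T)}\le M_1\|p-p_*\|_{L^2(0,T)}+\delta T^{1/2},
\end{equation*}
so the $H^1(0,T)$-norm of the difference $x-x_*$ is controlled \emph{linearly} by $\|z-z_*\|_{L^2(0,T)}+\delta$; the classical embedding $H^1\hookrightarrow L^\infty$ on an interval of uniformly bounded length, combined with the same time-reparametrization splitting that you perform in $L^\infty$ (the paper does it in $L^2$, to pass from $\|z(Ts)-z_*(T(z_*)s)\|$ to $\|z-z_*\|_{L^2(0,T)}$), then yields $|x(t)-x_*(t)|\le C_1\eta+C_2\delta$, and symmetrically for $p$. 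Your route instead uses only the uniform bound on the modified vector field, applied to each solution \emph{separately}, to get a large but uniform $H^1$ bound on $w=\zeta-\zeta_*$, and recovers smallness through the interpolation $\|w\|_{H^{3/4}_1}\le\|w\|_{H^{1/2}_1}^{1/2}\|w\|_{H^1_1}^{1/2}$ followed by the embedding $H^{3/4}_1\hookrightarrow L^\infty$. What each approach buys: yours needs less structure (no Lipschitz estimate on the difference of right-hand sides, no use of the $\delta$-smallness of $\widehat{\mathcal{R}}_\varepsilon$ at this stage), hence is more robust and would survive rougher perturbations, at the price of a square-root rate $O(\eta^{1/2})$ and of fractional Sobolev machinery; the paper's argument is more elementary (only $H^1\hookrightarrow L^\infty$) and gives the sharper linear rate $O(\eta+\delta)$. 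Both are fully adequate for the qualitative conclusion \eqref{eq-thesis-lemma}, and both deliver the uniformity in $z_*\in\mathcal{M}$ that the lemma requires.
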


\begin{proof}
Given $\sigma>0$, let $z=(x,p)$ be a $T$-periodic solution of \eqref{eq-system-mod} satisfying \eqref{eq-estimate-lemma} for some $\eta>0$. 
We first impose that $\hat{\eta} < \min \{T(z) \colon z\in \mathcal{M}\} /2$ in such a way that
\begin{equation}\label{est-per}
\dfrac{1}{2} \min_{z\in \mathcal{M}} T(z) \leq T \leq \max_{z\in \mathcal{M}} T(z) + \dfrac{1}{2} \min_{z\in \mathcal{M}} T(z).
\end{equation}
By the continuous embedding of $H^1$ into $L^\infty$ we have
\begin{equation*}
|x(t)-x_*(t)| \leq C \bigl( \|x-x_*\|_{L^2(0,T)} + \|\dot{x}-\dot{x}_*\|_{L^2(0,T)} \bigr), \quad \text{for every $t \in \mathbb{R}$,}
\end{equation*}
for a suitable constant $C >0$ which can be taken independent of $T$ in view of \eqref{est-per}.
Using the equations satisfied by $x$ and $x_*$ (note that $G(|p_*|) = \widehat{G}(|p_*|)$) we find
\begin{align*}
\|\dot{x}-\dot{x}_*\|_{L^{2}(0,T)}
&\leq
\|\nabla\widehat{G}(|p|)-\nabla\widehat{G}(|p_*|)\|_{L^{2}(0,T)} + \|\nabla_{p}\widehat{\mathcal{R}}_\varepsilon(t,x,p)\|_{L^{2}(0,T)}
\\
&\leq M_1 \|p-p_*\|_{L^2(0,T)} + \delta T^{1/2},
\end{align*}
where in the last inequality we have used the global Lipschitz-continuity of $\nabla\widehat{G}$
and \eqref{eq-delta} assuming that $\varepsilon\in(-\tilde{\varepsilon}(\delta),\tilde{\varepsilon}(\delta))$.
Summing up, 
\begin{align*}
|x(t)-x_*(t)| 
&\leq C \|x-x_*\|_{L^2(0,T)} +
C M_1 \|p-p_*\|_{L^2(0,T)} + C \delta T^{1/2}
\\
&\leq C (1+M_1) \|z-z_*\|_{L^2(0,T)} + C \delta T^{1/2}, \quad \text{for every $t \in \mathbb{R}$.}
\end{align*}
Now we estimate
\begin{align*}
\|z-z_*\|_{L^2(0,T)}^2 
&= \int_0^T |z(t)-z_*(t)|^2 \,\mathrm{d}t
= T \int_0^1 |z(Ts)-z_*(Ts)|^2 \,\mathrm{d}s
\\
&\leq 2T \left( \int_0^1 |z(Ts)-z_*(T(z_*)s)|^2 \,\mathrm{d}s  + \int_0^1 |z_*(T(z_*)s)-z_*(Ts)|^2 \,\mathrm{d}s \right)
\\
&\leq 2T \left(\|z(Ts)-z_*(T(z_*)s)\|_{L^2(0,1)}^2 + \left(\max_{t\in\mathbb{R}} |\dot{z}_*(t)| \right)^{\!2} |T(z_*)-T|^2\right)
\\
&\leq 2T \left(\|z(Ts)-z_*(T(z_*)s)\|_{H^{1/2}_1}^2 + \left(\max_{z\in\mathcal{M}}\max_{t\in\mathbb{R}} |\dot{z}(t)|\right)^{\!2} |T(z_*)-T|^2\right)
\\
&\leq 2T \eta^2\left(1 + \left(\max_{z\in\mathcal{M}}\max_{t\in\mathbb{R}} |\dot{z}(t)|\right)^{\!2} \right).
\end{align*}
Recalling also \eqref{est-per}, we finally find
\begin{equation*}
|x(t)-x_*(t)| \leq C_1 \eta + C_2 \delta, \quad \text{for every $t\in[0,T]$,}
\end{equation*}
where $C_1,C_2$ are positive constants which depend only on $\mathcal{M}$.
A completely analogous argument leads to
\begin{equation*}
|p(t)-p_*(t)| \leq C_3 \eta + C_4 \delta, \quad \text{for every $t\in[0,T]$,}
\end{equation*}
where $C_3,C_4$ are positive constants which depend only on $\mathcal{M}$.
For small enough $\eta$ and $\delta$ (and thus $\varepsilon$) the estimate \eqref{eq-thesis-lemma} thus follows.
In particular, if $\sigma < \min\{r_1,\rho_1\}/2$, then, taking into account \eqref{def-r-rho}, we deduce that
\begin{equation*}
\dfrac{r_{1}}{2} < r_{1}-\sigma < |x(t)| < r_{2} + \sigma < 2r_2,
\qquad
\dfrac{\rho_{1}}{2} < \rho_{1}-\sigma < |p(t)| < \rho_{2} + \sigma < 2\rho_2,
\end{equation*}
for every $t\in [0,T]$ and thus $z=(x,p)$ solves the original system \eqref{eq-system}.
\end{proof}

\begin{proof}[Proof of Theorem~\ref{th-abstract1}]
We consider the Hamiltonian system \eqref{eq-system-mod} associated with $\widehat{\mathcal{H}}_{\varepsilon}$.
In view of the regularity and growth properties of $\widehat{\mathcal{H}}_{\varepsilon}$, 
as discussed in Section~\ref{section-3.1}, the $T$-periodic problem associated with \eqref{eq-system-mod} is equivalent to the search of critical points for the Hamiltonian action functional
$\mathcal{A}_{\varepsilon}\colon H^{1/2}_T \to \mathbb{R}$ given by
\begin{equation*}
\mathcal{A}_{\varepsilon}(z) = \frac{1}{2}  \ell_T(z,z) - \int_0^T \widehat{\mathcal{H}}_{\varepsilon}(t,z(t))\,\mathrm{d}t.
\end{equation*}
Choosing $\mathfrak{N} = \mathcal{M}$, we claim that Theorem~\ref{thastratto1} can be applied. 

Indeed, assumption $(i)$ trivially holds true since $\widehat{\mathcal{H}}_{0}$ and $\mathcal{H}_{0}$ coincide on an open region containing the orbit of any element of $\mathcal{M}$. 
For the same reason, condition $(iii)$ is equivalent to the non-degeneracy of $\mathcal{M}$ in view of Proposition~\ref{prop-nondegper}.
Finally, the validity of the Fredholm property in $(ii)$ has already been discussed in Section~\ref{section-3.1}. 

Let us fix $\sigma\in (0, \min\{r_1,\rho_1\}/2)$ and let $\hat{\varepsilon}$ and $\hat{\eta}$ be given by Lemma~\ref{lemm-est}.
Choosing $\eta\in(0,\hat{\eta})$ we consider the open neighborhood 
\begin{equation*}
U_{\eta} = \bigl\{ z\in H^{1/2}_T \colon \operatorname{dist}(z,\mathfrak{N}) < \eta \bigr\} \supset \mathfrak{N}.
\end{equation*}
Then Theorem~\ref{thastratto1} yields $\varepsilon^*\in(0,\hat{\varepsilon})$ such that, if $\varepsilon\in(-\varepsilon^*,\varepsilon^*)$, the Hamiltonian system \eqref{eq-system-mod} possesses at least $\operatorname{cat}(\mathfrak{N})$ $T$-periodic solutions belonging to $U_{\eta}$.
By Lemma~\ref{lemm-est}, these solutions solve system \eqref{eq-system} and satisfies \eqref{cond-th-abstract1}. The theorem is thus proved.
\end{proof}

\begin{proof}[Proof of Theorem~\ref{th-abstract2}]
We consider the Hamiltonian system \eqref{eq-system-mod} associated with $\widehat{\mathcal{H}}_{\varepsilon}$, which now does not depend on time.
In view of the regularity and growth properties of $\widehat{\mathcal{H}}_{\varepsilon}$, 
as discussed in Section~\ref{section-3.1} the fixed-energy (that is, $\widehat{\mathcal{H}}_{\varepsilon}(z)=h$) periodic problem associated with \eqref{eq-system-mod} is equivalent to the search of critical points for the free-period action functional
$\mathcal{B}_{\varepsilon}\colon H^{1/2}_1 \times (0,+\infty) \to \mathbb{R}$ given by
\begin{equation*}
\mathcal{B}_{\varepsilon}(\zeta,T) = \frac{1}{2}  \ell_1(\zeta,\zeta) - T \int_0^1 (\widehat{\mathcal{H}}_{\varepsilon}(\zeta(s))-h)\,\mathrm{d}s.
\end{equation*}
Denoting by $T(z)$ the chosen period $T(x)$ of $z=(x,p)\in\mathcal{M}$, we define the manifold
\begin{equation*}
\mathfrak{N} = \bigl\{(z(T(z)s),T(z)) \colon z\in\mathcal{M} \bigr\}.
\end{equation*}
Similarly as in the proof of Theorem~\ref{th-abstract1}, one can check that Theorem~\ref{thastratto1} applies; in particular the non-degeneracy assumption $(iii)$ holds by Proposition~\ref{prop-nondegene}.

Let us fix $\sigma\in (0, \min\{r_1,\rho_1\}/2)$ and let $\hat{\varepsilon}$ and $\hat{\eta}$ be given by Lemma~\ref{lemm-est}.
Choosing $\eta\in(0,\hat{\eta})$ we consider the open neighborhood 
\begin{equation*}
U_{\eta} = \bigl\{ (\zeta,T) \in H^{1/2}_1 \times (0,+\infty) \colon \operatorname{dist}((\zeta,T) ,\mathfrak{N}) < \eta \bigr\} \supset \mathfrak{N}.
\end{equation*}
Then Theorem~\ref{thastratto1} yields $\varepsilon^*\in(0,\hat{\varepsilon})$ such that, if $\varepsilon\in (-\varepsilon^*,\varepsilon^*)$, the 
functional $\mathcal{B}_{\varepsilon}$ possesses at least $1$ ($\leq \operatorname{cat}(\mathfrak{N})$, cf.~Remark~\ref{rem-cat-multiplicity}) critical point $(\zeta,T)\in U_{\eta}$. Then, the rescaled function $z(t) = \zeta(t/T)$ is a $T$-periodic solution of energy $h$ of the Hamiltonian system \eqref{eq-system-mod} satisfying \eqref{eq-estimate-lemma}.
By Lemma~\ref{lemm-est}, $z$ solves \eqref{eq-system} and satisfies \eqref{cond-th-abstract2} and $|T-T(x^*)|<\sigma$ provided that $\eta<\sigma$. The theorem is thus proved.
\end{proof}

\begin{remark}\label{rem-weinstein}
Since Theorem~\ref{th-abstract1} and Theorem~\ref{th-abstract2} are valid in any dimension (see Remark~\ref{rem-d-dim}) and for every non-degenerate periodic manifold $\mathcal{M}$, they can be applied in a variety of different situations, thus recovering some results already proved with other techniques.
For instance, they can be used to establish bifurcation from non-circular solutions in the plane, previously investigated in \cite{BDF-2} and \cite{BDF-3}, for the fixed-period and fixed-energy problems respectively, with arguments of Hamiltonian perturbation theory
(note that the non-degeneracy conditions on the Hamiltonian $\mathcal{K}_0$ in action-angle coordinates used in \cite{BDF-2,BDF-3} are equivalent to the non-degeneracy conditions of Theorem~\ref{th-abstract1} and Theorem~\ref{th-abstract2}). 
Moreover, Theorem~\ref{th-abstract2} can be applied to the case of bifurcation from circular solutions (in the plane or in the space) for the fixed-energy problem, recently tackled in \cite{BoFePa-25} using the Hamiltonian bifurcation theory developed in \cite{We-73,We-78}. Theorem~\ref{th-abstract1} and Theorem~\ref{th-abstract2} thus show that all these situations (circular/non-circular solutions, dimension two/three, fixed-period/fixed-energy) can be treated in a unified manner, always relying on the abstract bifurcation Theorem~\ref{th-abstract1}, via the $H^{1/2}$-Hamiltonian variational formulation described in this section. 

It is worth observing that Theorem~\ref{th-abstract2} could also be established as a direct corollary of the results in \cite{We-73,We-78}, providing bifurcation of fixed-energy solutions in the even more general setting of Hamiltonian systems on symplectic manifolds. The proof given in \cite{We-78} relies on variational arguments, as well; however, due to the more general and geometrical setting, $\mathcal{C}^\infty$-smoothness is assumed and a different functional setting is introduced: consequently a direct application of Theorem~\ref{th-abstract1} is not possible and the arguments become rather complicated. For these reasons, we thought it appropriate to provide, in the setting of equation \eqref{eq-main-bis}, an alternative and simpler treatment of the fixed-energy problem. We stress that this is done by using the $H^{1/2}$-Hamiltonian formulation, which is nowadays classical for the fixed-period problem: in this way, the asymmetry of the existing literature seems to be overcome.
\hfill$\lhd$
\end{remark}

\subsection{The main results}\label{section-4.2}

We now consider the unperturbed problem \eqref{eq-unperturbed} and assume the existence of a non-circular (thus non-constant) non-rectilinear periodic solution $x^*$ which, without loss of generality, is required to lie on the plane $x_3 = 0$.
Accordingly, we consider the manifold
\begin{equation}\label{def-M2-bis}
\mathcal{M}_2 = \bigl{\{} e^{i\phi} \pi(x^*(t-\theta)) \colon \phi,\theta \in \mathbb{R} \bigr{\}},
\end{equation}
where $\pi\colon \mathbb{R}^3 \to \mathbb{R}^2$ is defined as $\pi(x_1,x_2,x_3) = (x_1,x_2)$. Note that 
$\mathcal{M}_2$ is a manifold of non-circular periodic solutions of the unperturbed problem \eqref{eq-unperturbed} in dimension $2$, cf.~\eqref{def-M2}.

With this in mind, the following theorems can be stated. 

\begin{theorem}\label{th-main1}
Let $x^*$ be a non-circular non-rectilinear $T$-periodic solution for the unperturbed problem \eqref{eq-unperturbed} and assume that the manifold 
$\mathcal{M}_2$ defined in \eqref{def-M2-bis} is non-degenerate in the sense of Proposition~\ref{prop-nondegper}.
Then, for every $\sigma > 0$, there exists $\varepsilon^* > 0$ such that for every $\varepsilon \in (-\varepsilon^*,\varepsilon^*)$ there are five solutions $x_i$ of \eqref{eq-main-periodo} and $M_i \in O(3)$, $\theta_i \in \mathbb{R}$ ($i=1,\ldots,5$) such that
\begin{equation}\label{cond-th-main1}
\vert x_i(t) - M_i x^*(t - \theta_i) \vert < \sigma, \quad  \text{for every $t \in [0,T]$,}
\end{equation}
for $i=1,\ldots,5$.
\end{theorem}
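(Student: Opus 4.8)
The plan is to apply the abstract bifurcation result Theorem~\ref{th-abstract1} with the choice $\mathcal{M} = \mathcal{M}_3$, the spatial periodic manifold \eqref{defM3} generated by $x^*$. By Proposition~\ref{prop-2.1}, $\mathcal{M}_3$ is a compact four-dimensional manifold, homeomorphic to $SO(3) \times \mathbb{T}^1$, with $\operatorname{cat}(\mathcal{M}_3) = 5$; moreover all its elements are non-constant non-rectilinear $T$-periodic solutions of \eqref{eq-unperturbed}, since rotations and time-translations preserve the period. Hence every hypothesis of Theorem~\ref{th-abstract1} concerning $\mathcal{M}$ is met, except for the non-degeneracy requirement. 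Once the latter is checked, the theorem directly produces $\operatorname{cat}(\mathcal{M}_3) = 5$ solutions $x_i$ together with the closeness estimate which, writing each limiting element of $\mathcal{M}_3$ as $M_i x^*(\cdot - \theta_i)$ with $M_i \in O(3)$ and $\theta_i \in \mathbb{R}$, is exactly \eqref{cond-th-main1}. The whole difficulty therefore lies in showing that $\mathcal{M}_3$ is non-degenerate in the sense of Proposition~\ref{prop-nondegper}.

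I would verify condition $(i)$ of that proposition, namely that $\dim \ker(I - P_z) = 4$ for every $z \in \mathcal{M}_3$. Because the unperturbed problem \eqref{eq-unperturbed} is autonomous and equivariant under the action $(x,p) \mapsto (Mx, Mp)$ of $O(3)$, its flow commutes with this action and with time-translations; consequently the monodromies at any two points of $\mathcal{M}_3$ are linearly conjugate and $\dim\ker(I - P_z)$ is constant on $\mathcal{M}_3$. It thus suffices to carry out the computation at the planar representative $z^* = (x^*,p^*)$, with $x^*$ in the plane $x_3 = 0$. Around the orbit of $z^*$ I would introduce the Mishchenko--Fomenko partial action-angle coordinates \eqref{change-variable-3}, which are available precisely because this orbit lies in the region covered by $\widetilde{\Psi}$, and in which \eqref{eq-HS0} takes the canonical form \eqref{action-angle-3}.

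Linearizing \eqref{action-angle-3} along the solution with $I = I^*$, as in the proof of Proposition~\ref{prop-nondegper}, the variations obey $\dot{\delta I_i} = 0$, $\dot{\delta\phi_i} = \sum_j \partial_{I_iI_j}\mathcal{K}_0(I^*)\,\delta I_j$, and $\dot{\delta\Xi} = \dot{\delta\xi} = 0$. Hence the monodromy $\widetilde{Q}$ in these coordinates fixes $\delta I$, $\delta\Xi$ and $\delta\xi$, and sends $\delta\phi \mapsto \delta\phi + T\,\nabla^2\mathcal{K}_0(I^*)\,\delta I$, so that
\[
\dim \ker(I - \widetilde{Q}) = 4 + \operatorname{nullity}\bigl(\nabla^2\mathcal{K}_0(I^*)\bigr),
\]
the four free dimensions coming from $\delta\phi_1,\delta\phi_2$ and from the extra conjugate pair $(\delta\Xi,\delta\xi)$ peculiar to the superintegrable setting. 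Since $P_{z^*}$ and $\widetilde{Q}$ are conjugate through the differential of $\widetilde{\Psi}$, they have kernels of equal dimension; and the assumed non-degeneracy of $\mathcal{M}_2$ is, by condition $(iv)$ of Proposition~\ref{prop-nondegper}, exactly $\det\nabla^2\mathcal{K}_0(I^*) \neq 0$, which makes the nullity above vanish. Therefore $\dim\ker(I - P_{z^*}) = 4 = \dim\mathcal{M}_3$, and $\mathcal{M}_3$ is non-degenerate.

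The main obstacle is precisely this last reduction, and in particular the correct bookkeeping of the extra pair $(\Xi,\xi)$: one must check that both variables, being constant along the unperturbed flow, contribute their full two dimensions to the kernel, so that it has dimension four — matching $\dim\mathcal{M}_3$ — exactly when the planar Hessian $\nabla^2\mathcal{K}_0(I^*)$ is nonsingular. A subordinate technical point is the legitimacy of computing the monodromy in the local, non-global chart \eqref{change-variable-3}: this is justified because $P_{z^*}$ depends only on the linearized flow in an arbitrarily small neighbourhood of the orbit of $z^*$, which is entirely contained in the domain of $\widetilde{\Psi}$. With non-degeneracy established, the conclusion follows from the application of Theorem~\ref{th-abstract1} outlined above.
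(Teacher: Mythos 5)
Your proposal is correct and follows essentially the same route as the paper's own proof: reduce Theorem~\ref{th-main1} to the non-degeneracy of $\mathcal{M}_3$, use the invariances to compute the monodromy only at the planar representative $(x^*,p^*)$, pass to the Mishchenko--Fomenko coordinates \eqref{change-variable-3} so that the monodromy becomes $Q(X,Y,\alpha,\beta)=(X,\,T\nabla^2\mathcal{K}_0(I^*)X+Y,\,\alpha,\,\beta)$, and conclude that $\ker(I-Q)$ has dimension $4$ exactly when $\det\nabla^2\mathcal{K}_0(I^*)\neq 0$, which is condition $(iv)$ of Proposition~\ref{prop-nondegper} for $\mathcal{M}_2$, before invoking Theorem~\ref{th-abstract1} with $\operatorname{cat}(\mathcal{M}_3)=5$. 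Your explicit justifications of the two technical points (conjugacy of monodromies along the group orbit, and the legitimacy of the local chart) are details the paper compresses into ``due to the invariances'' and the citation of \cite[Appendix~A]{BoFePa-25}, but they match its argument.
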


\begin{theorem}\label{th-main2}
Let $x^*$ be a non-circular non-rectilinear $T^*$-periodic solution for the unperturbed problem \eqref{eq-unperturbed} (for some $T^*>0$) and assume that the manifold $\mathcal{M}_2$ defined in \eqref{def-M2-bis} is non-degenerate in the sense of Proposition~\ref{prop-nondegene}, with $h = \mathcal{E}_0(x^*(t),\dot x^*(t))$.
Then, for every $\sigma > 0$, there exists $\varepsilon^* > 0$ such that for every $\varepsilon \in (-\varepsilon^*,\varepsilon^*)$ there are 
a solution $(x,T)$ of \eqref{eq-main-energia} and $M \in O(3)$ such that $\vert T - T^* \vert < \sigma$ and 
\begin{equation}\label{cond-th-main2}
\vert x(t) - M x^*(t) \vert < \sigma, \quad  \text{for every $t \in [0,T]$.}
\end{equation}
\end{theorem}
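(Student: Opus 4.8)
The plan is to deduce both theorems from the abstract bifurcation results of Section~\ref{section-4.1}, applied with $\mathcal{M} = \mathcal{M}_3$. By Proposition~\ref{prop-2.1}, the manifold $\mathcal{M}_3$ defined in \eqref{defM3} is a compact manifold of non-constant non-rectilinear periodic solutions of the unperturbed problem \eqref{eq-unperturbed}, of dimension four and with $\operatorname{cat}(\mathcal{M}_3) = 5$; moreover, a smooth choice of period is available, namely the common period ($T$, resp. $T^*$) shared by all solutions in $\mathcal{M}_3$. Granting that $\mathcal{M}_3$ is non-degenerate in the sense of Proposition~\ref{prop-nondegper} (fixed-period) or Proposition~\ref{prop-nondegene} (fixed-energy), Theorem~\ref{th-abstract1} produces $\operatorname{cat}(\mathcal{M}_3) = 5$ bifurcating solutions, yielding Theorem~\ref{th-main1}, while Theorem~\ref{th-abstract2} produces one bifurcating solution $(x,T)$ with $|T - T^*| < \sigma$, yielding Theorem~\ref{th-main2}; the estimates \eqref{cond-th-main1} and \eqref{cond-th-main2} are exactly \eqref{cond-th-abstract1} and \eqref{cond-th-abstract2} once $\mathcal{M}_3$ is parametrised by pairs $(M,\theta)\in O(3)\times\mathbb{R}$. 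Thus the entire problem reduces to verifying the non-degeneracy of $\mathcal{M}_3$, which is where the superintegrable structure of Section~\ref{section-2.3} enters.

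To check non-degeneracy I would first exploit the $O(3)$- and time-translation-invariance of \eqref{eq-unperturbed}: the monodromy $P_z$, and all the linearised data appearing in Propositions~\ref{prop-nondegper} and \ref{prop-nondegene}, at a point $M x^*(\cdot-\theta)$ is conjugate to the corresponding data at $x^*$, so the relevant kernel dimensions are constant along $\mathcal{M}_3$ and it suffices to compute them at the single planar orbit $x^*$. Around $x^*$ I would pass to the partial action-angle coordinates \eqref{change-variable-3} given by the Mishchenko--Fomenko theorem, in which \eqref{eq-HS0} takes the form \eqref{action-angle-3}; since $\widetilde{\Psi}$ is symplectic it conjugates both the flow and its linearisation, so the kernel dimensions computed in $(I_1,I_2,\phi_1,\phi_2,\Xi,\xi)$ agree with the original ones. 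In the fixed-period case, linearising \eqref{action-angle-3} gives $\dot{\delta I}_i=0$, $\dot{\delta\Xi}=\dot{\delta\xi}=0$ and $\dot{\delta\phi}_i=\sum_j\partial^2_{I_iI_j}\mathcal{K}_0(I^*)\,\delta I_j$, so the time-$T$ monodromy $Q$ fixes $(\delta I,\delta\Xi,\delta\xi)$ and acts on $\delta\phi$ by the shear $\delta\phi\mapsto\delta\phi+T\nabla^2\mathcal{K}_0(I^*)\delta I$, exactly as in the proof of Proposition~\ref{prop-nondegper}. Hence $\dim\ker(I-Q)=4+\operatorname{nullity}(\nabla^2\mathcal{K}_0(I^*))$, where the summand $4$ comes from the two free directions $\delta\phi$ and the two new constant coordinates $\delta\Xi,\delta\xi$. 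Imposing that this equal $\dim\mathcal{M}_3=4$ is precisely $\det\nabla^2\mathcal{K}_0(I^*)\neq0$, i.e. condition $(iv)$ of Proposition~\ref{prop-nondegper}, which is the assumed non-degeneracy of $\mathcal{M}_2$; via the isomorphism $\ker(I-Q)\cong\ker(I-P_z)$ this verifies condition $(i)$ for $\mathcal{M}_3$ and closes Theorem~\ref{th-main1}.

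For the fixed-energy case I would solve the linearised problem \eqref{linear-syst-fix-en} in the same coordinates. Writing $A=\nabla^2\mathcal{K}_0(I^*)$, and noting that in these coordinates $\nabla\mathcal{K}_0$ has only the $(I_1,I_2)$-components $\nabla\mathcal{K}_0(I^*)$ while $\nabla^2\mathcal{K}_0$ is supported on the $(I_1,I_2)$-block, one finds that $\delta I,\delta\Xi,\delta\xi$ are constant, that $T$-periodicity of $\delta\phi$ forces $A\,\delta I+\gamma\nabla\mathcal{K}_0(I^*)=0$, and that the constraint $\langle\nabla\mathcal{H},w\rangle\equiv0$ becomes $\langle\nabla\mathcal{K}_0(I^*),\delta I\rangle=0$. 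These two conditions say exactly that $(\delta I,\gamma)$ lies in the kernel of the bordered matrix of Proposition~\ref{prop-nondegene}$(iv)$; since $\nabla\mathcal{K}_0(I^*)\neq0$, the non-vanishing of its determinant forces $\delta I=0$ (and $\gamma=0$), so the solution space reduces to the free data $(\delta\phi(0),\delta\Xi,\delta\xi)$, of dimension $4=\dim\mathcal{M}_3$. This verifies condition $(ii)$ of Proposition~\ref{prop-nondegene} for $\mathcal{M}_3$ and, by the isomorphism with $\mathcal{F}$, closes Theorem~\ref{th-main2}.

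The main obstacle, conceptually, is precisely the dimension accounting of the last two paragraphs: one must verify that the extra conjugate pair $(\Xi,\xi)$ — which has no analogue in the integrable planar problem and embodies the superintegrable, hence KAM-degenerate, character of the spatial problem — contributes \emph{exactly} two trivial kernel directions, so that the spatial kernel is four-dimensional precisely when the planar non-degeneracy holds. Equivalently, the delicate point is that the two additional dimensions of $\mathcal{M}_3$ relative to $\mathcal{M}_2$, namely $\dim(SO(3)\times\mathbb{T}^1)-\dim\mathbb{T}^2=2$, are matched one-to-one by the two new constant coordinates, leaving the planar Hessian $\nabla^2\mathcal{K}_0(I^*)$ (resp. the bordered planar matrix) as the only possible source of degeneracy. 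The locality of the Mishchenko--Fomenko chart is handled by the equivariance argument above, while the Fredholm hypothesis $(ii)$ of Theorem~\ref{thastratto1} and the passage to the modified Hamiltonian have already been established in Section~\ref{section-3.1} and in the proofs of Theorem~\ref{th-abstract1} and Theorem~\ref{th-abstract2}.
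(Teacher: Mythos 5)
Your proposal is correct and follows essentially the same route as the paper: reduction to the non-degeneracy of $\mathcal{M}_3$ followed by Theorem~\ref{th-abstract2}, use of the invariances to fix the planar orbit $x^*$, passage to the Mishchenko--Fomenko partial action-angle coordinates, and reduction to the bordered-matrix condition $(iv)$ of Proposition~\ref{prop-nondegene}, i.e.\ to the assumed non-degeneracy of $\mathcal{M}_2$. The only (immaterial) difference is that you verify condition $(ii)$ of Proposition~\ref{prop-nondegene} by solving the linearized problem \eqref{linear-syst-fix-en} directly in those coordinates, whereas the paper verifies condition $(i)$ by computing the space $\mathcal{G}$ associated with the monodromy $Q$; these amount to the same algebraic computation, and the two conditions are equivalent by that very proposition.
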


\begin{remark}\label{rem-O3-SO3}
In view of the description of the periodic manifold $\mathcal{M}_3$ given in the proof of Proposition~\ref{prop-2.1}, in \eqref{cond-th-main1} one can take $M_i \in SO(3)$ and $\theta_i \in [0,\tau^*)$, where $\tau^*$ is the minimal period of $|x^*|$.
Similarly, in \eqref{cond-th-main2} it is possible to choose $M \in SO(3)$; notice that here a time-translation is not needed (that is, we can assume $\theta = 0$), since any time-translation of a solution of \eqref{eq-main-energia} is still a solution of \eqref{eq-main-energia}.
\hfill$\lhd$
\end{remark}

The proofs of Theorem~\ref{th-main1} and Theorem~\ref{th-main2} will be obtained by showing that the required non-degeneracy of $\mathcal{M}_2$
implies that the manifold
\begin{equation*}
\mathcal{M}_3 = \bigl{\{} M x^*(t-\theta) \colon M\in O(3), \theta \in \mathbb{R} \bigr{\}}
\end{equation*}
of periodic solutions of \eqref{eq-unperturbed} is non-degenerate as well, in such a way that Theorem~\ref{th-abstract1} or, respectively, Theorem~\ref{th-abstract2} can be applied.
Note that $\operatorname{cat}(\mathcal{M}_3) = 5$ by Proposition~\ref{prop-2.1} and this explains the number of solutions in the statement of Theorem~\ref{th-main1}.

\begin{proof}[Proof of Theorem~\ref{th-main1}]
Let $x\in \mathcal{M}_3$. Since $\operatorname{dim} (\mathcal{M}_3) = 4$ by Proposition~\ref{prop-2.1}, we need to show that the linearization along $x$ of the Hamiltonian system 
\begin{equation}\label{eq-HS0-sect4}
\begin{cases}
\, \dot{x} = \varphi^{-1}(p),
\vspace{4pt}\\
\, \dot{p} = V'(|x|) \dfrac{x}{|x|},
\end{cases}
\qquad (x,p) \in \bigl(\mathbb{R}^{3} \setminus \{0\} \bigr) \times \mathbb{R}^3,
\end{equation}
has a $4$-dimensional kernel of $T$-periodic solutions.
Without loss of generality, due to the invariances of the equation, we can assume that $x=x^*$, whose orbits lie in the plane $x_3=0$.
As discussed in Section~\ref{section-2.3}, in suitable partial action-angle coordinates, \eqref{eq-HS0-sect4} writes as
\begin{equation}\label{action-angle-3-sect4}
\begin{cases}
\, \dot I_i = 0, \qquad \dot \phi_i = \partial_{I_i} \mathcal{K}_0(I_1,I_2), \qquad i=1,2, \vspace{2pt}\\
\, \dot \Xi = 0, \qquad \dot \xi = 0,
\end{cases}
\end{equation}
where $\mathcal{K}_0$ is the Hamiltonian of the planar problem in action-angle coordinates and $I^*$ is the action value corresponding to $\mathcal{M}_2$, see Section~\ref{section-2.2}.
The monodromy $Q \colon \mathbb{R}^6\to\mathbb{R}^6$ associated to \eqref{action-angle-3-sect4} is 
\begin{equation*}
Q(X,Y,\alpha,\beta)
=(X, T \nabla^2\mathcal{K}_{0}(I^*)X+Y, \alpha, \beta),
\quad (X,Y,\alpha,\beta) \in \mathbb{R}^2\times\mathbb{R}^2\times\mathbb{R}\times\mathbb{R},
\end{equation*}
and thus
\begin{equation*}
\ker(I-Q) = \bigl{\{} (X,Y,\alpha,\beta) \in\mathbb{R}^6 \colon \nabla^2\mathcal{K}_{0}(I^*) X = 0 \bigr{\}}.
\end{equation*}
Since the periodic manifold $\mathcal{M}_2$ is non-degenerate by hypothesis, Proposition~\ref{prop-nondegper} $(iv)$ ensures that $\det \nabla^2 \mathcal{K}_0(I^*) \neq 0$. Therefore, 
\begin{equation*}
\ker(I-Q) = \{0\} \times \mathbb{R}^2 \times \mathbb{R} \times \mathbb{R}
\end{equation*}
and thus its dimension is four. 
Since this property is invariant by change of coordinates (cf.~\cite[Appendix~A]{BoFePa-25}), the proof is concluded.
\end{proof}

\begin{proof}[Proof of Theorem~\ref{th-main2}]
Similarly as in the proof of Theorem~\ref{th-main1}, we fix $x=x^*\in\mathcal{M}_3$ and, after passing to partial action-angle coordinates, we consider the monodromy
\begin{equation*}
Q(X,Y,\alpha,\beta)
=(X, T^* \nabla^2\mathcal{K}_{0}(I^*)X+Y, \alpha, \beta),
\quad (X,Y,\alpha,\beta) \in \mathbb{R}^2\times\mathbb{R}^2\times\mathbb{R}\times\mathbb{R}.
\end{equation*}
According to \cite[Appendix~A]{BoFePa-25}, the non-degeneracy condition can be verified directly for the operator $Q$. Precisely, setting
\begin{align*}
\mathcal{G} 
&= \bigl{\{} (X,Y,\alpha,\beta) \in T_{(I^*,\phi,\Xi,\xi)}\mathcal{K}_{0}^{-1}(h) \colon
\\ 
&\qquad (X,Y,\alpha,\beta) = Q(X,Y,\alpha,\beta) + \lambda (0,\nabla \mathcal{K}_0(I^*),0,0), \text{ for some $\lambda \in \mathbb{R}$} 
\bigr{\}}
\\
&=\bigl{\{} (X,Y,\alpha,\beta) \in \mathbb{R}^6 \colon
\langle (X, \nabla \mathcal{K}_{0}(I^*) \rangle = 0,
\\
&\qquad (0, T^* \nabla^2\mathcal{K}_{0}(I^*)X, 0,0)= \lambda (0,\nabla \mathcal{K}_0(I^*),0,0), \text{ for some $\lambda \in \mathbb{R}$}
\bigr{\}},
\end{align*}
the condition to be checked is now that $\operatorname{dim}(\mathcal{G})=\operatorname{dim}(\mathcal{M}_3)=4$.
Arguing exactly as in the final part of the proof of Proposition~\ref{prop-nondegene}, one can check that this is true if and only if \begin{equation*}
\operatorname{det}
\begin{pmatrix}
\nabla^2 \mathcal{K}_{0}(I^{*}) &\nabla \mathcal{K}_{0}(I^{*})^{\top} \vspace{3pt} \\
\nabla \mathcal{K}_{0}(I^{*}) & 0
\end{pmatrix} 
\neq 0,
\end{equation*}
that is, by Proposition~\ref{prop-nondegene}, if and only if $\mathcal{M}_2$ is non-degenerate.
The proof is thus complete.
\end{proof}

\section*{Acknowledgements}
We are grateful to Prof.~Alexander Dranishnikov for a useful discussion about the Lusternik--Schnirelmann category.





\bibliographystyle{elsart-num-sort}
\bibliography{BoFePa-biblio}

\end{document}